\documentclass[11pt,leqno,a4paper,english,leqno]{amsart}

\usepackage[left=3.5cm,right=3.5cm,top=3cm,bottom=3cm]{geometry}
\usepackage{amssymb,amsmath,amsfonts,amsthm,amscd}
\usepackage{leftidx}
\usepackage{tikz}
\usepackage{mathrsfs}
\usepackage{hyperref}
\hypersetup{
    colorlinks=true,       
    linkcolor=blue,          
    citecolor=blue,        
    filecolor=magenta,      
    urlcolor=cyan           
}

\usepackage{enumerate}

\newcommand{\n}[2]{\Sigma^{#1}_{#2}}

\newcommand{\eps}{\varepsilon}

\newcommand{\black}[1]{{\color{black}#1}}

\newtheorem{theorem}{Theorem}
\newtheorem{proposition}[theorem]{Proposition}
\newtheorem{lemma}[theorem]{Lemma}
\newtheorem{definition}[theorem]{Definition}
\newtheorem{example}[theorem]{Example}
\newtheorem{remark}[theorem]{Remark}
\newtheorem{corollary}[theorem]{Corollary}
\newtheorem{notation}[theorem]{Notation}

\newtheorem*{theo*}{Theorem}

\DeclareMathOperator{\supp}{supp}

\def\Id{\text{Id}}

\def\eps{\varepsilon}

\def\la{\left\lvert}
\def\lA{\left\lVert}
\def\ra{\right\rvert}
\def\rA{\right\rVert}

\def\blA{\big\lVert}

\def\brA{\big\rVert}

\def\les{\lesssim}

\def\ba{\begin{align}}
\def\bad{\begin{aligned}}
\def\be{\begin{equation}}
\def\ea{\end{align}}
\def\ead{\end{aligned}}
\def\ee{\end{equation}}
\def\e{\eqref}

\def\xK{\mathbb{K}}

\def\xC{\mathbb{C}}
\def\xR{\mathbb{R}}
\def\xN{\mathbb{N}}

\def\var{f}

\setcounter{tocdepth}{2}

\numberwithin{equation}{section}
\pagestyle{plain}

\def\Sr{\mathcal{S}}

\def\mez{\frac{1}{2}}


\begin{document}

\title{Nonlinear interpolation and the flow map of quasilinear equations}

\author{Thomas Alazard}
\address{CNRS - Centre de Mathématiques Laurent Schwartz, {\'E}cole Polytechnique, 
Institut Polytechnique de Paris}
\email{thomas.alazard@polytechnique.edu}

\author{Nicolas Burq}
\address{Laboratoire de Math{\'e}matiques d’Orsay, Universit{\'e} Paris-Sud, Universit{\'e} Paris- Saclay}
\email{nicolas.burq@universite-paris-saclay.fr}

\author{Mihaela Ifrim}
\address{Department of Mathematics, University of Wisconsin, Madison}
\email{ifrim@wisc.edu}

\author{ Daniel Tataru}
\address{Department of Mathematics, University of California at Berkeley}
\email{tataru@math.berkeley.edu}

\author{Claude Zuily}
\address{Laboratoire de Math{\'e}matiques d’Orsay, Universit{\'e} Paris-Sud, Universit{\'e} Paris- Saclay}
\email{claude.zuily@universite-paris-saclay.fr}

\maketitle

\begin{abstract}
We prove an interpolation theorem for nonlinear functionals defined on scales of Banach spaces that generalize Besov spaces. It applies to functionals defined only locally, 
requiring only some weak Lipschitz conditions, extending those introduced by Lions and Peetre. Our analysis is self-contained and independent of any previous results 
about interpolation theory. It depends solely on the concepts of Friedrichs' mollifiers, seen through the formalism introduced by Hamilton, combined with the 
frequency envelopes introduced by Tao and used recently by two of the authors \text{and others} to study the Cauchy problem for various quasilinear 
\text{evolutions in partial differential} equations. Inspired by this latter work, our main application states that, for an abstract flow map of a 
quasilinear problem,  \black{both} the continuity of the flow  \black{ as a function of time and the continuity of the data-to-solution map} 
follow automatically from the estimates that are usually proven when establishing the existence of solutions: propagation of regularity via tame {\em a priori} estimates for higher regularities and contraction for weaker norms.

\end{abstract}

\section{Introduction}
The theory of interpolation originated from the study of Lebesgue spaces, with the Riesz-Thorin~\cite{MR1555250}  and Marcinkiewicz~\cite{marcinkiewicz1939interpolation} 
theorems, or Sobolev spaces with the study of traces of functions, pioneered by Lions~\cite{MR0119092}. An important generalization is the scale of Besov spaces $B^s_{p,q}(\mathbb{R}^d)$, 
which serves as the primary example studied in the classical books by Bergh and L\"{o}fstr\"{o}m~\cite{MR0482275} 
and Peetre~\cite{MR0461123}. The extension of interpolation theory to a nonlinear setting goes back to the \black{'70}, with works of  Lions~\cite{MR0267420}, Peetre~\cite{MR0482280},  Tartar~\cite{MR0310619}, 
Saut~\cite{MR0454374}, 
Bona and Scott~\cite{MR0393887}, and Maligranda~\cite{MR1024942}. However, it is fair to say that it has been 
relatively underexplored, 
particularly concerning applications that involve nonlinearities significant enough to be relevant for studying 
solutions to quasilinear partial differential equations. 
We will take advantage of this opportunity to introduce a new variation of the classical interpolation techniques, 
which enables us to derive automatic continuity results for 
nonlinear functions defined on generalized Besov spaces. Our analysis relies solely on the concepts of Friedrichs' mollifiers, seen through the formalism introduced by 
Hamilton~\cite{MR0656198}, combined with a simplified version of the notion of frequency envelopes introduced by Tao~\cite{MR1869874} for studying dispersive equations. 
In fact, our approach is strongly inspired by a series of recent papers ~\cite{MR4557379,MR4676214,MR4721644,ifrim2023sharp,pineau2023low, AIT} 
where the concept of frequency envelopes is used to offer an alternative perspective on the study of the Cauchy problem for quasilinear \black{partial differential } 
equations of different types, and in particular to prove continuous dependence of the solution as a function of the initial data.  

Our main result is given in the next section (see Theorem~\ref{Tmain}). 
We begin by stating its main corollary.
The latter is in the form of a general result which guarantees that the flow map of an abstract quasilinear equation exists 
and is continuous. More precisely, the next result asserts that the continuity of an abstract 
flow follows from the two estimates that are usually proven when establishing the existence of solutions to a quasilinear 
equation: (i) tame {\em a priori} estimate in \black{ a stronger} norm and (ii) \black{Lipschitz bounds} in a weaker norm. 
Here, using the terminology of Hamilton~\cite{MR0656198}, we say that an estimate is 
{\em tame} provided it depends linearly on the highest norm. We believe that this abstract result could be readily 
used to avoid 
some {\em ad-hoc} proofs that are somewhat subtle and technical, particularly those concerning the continuous 
dependence on initial data and the continuity in time of the solutions of quasilinear dispersive equations.

\begin{theorem}[Automatic continuity of the flow map]\label{princ}
Consider $\mu \in [2,+\infty]$ and four real numbers $s_0,s,s_1$ and   $r$  such that
$$
s_0<s<s_1,\quad \quad r>0.
$$
Consider $u_0$ in the Sobolev space $H^s=H^s(\xR^d)$ and a 
(possibly nonlinear) 
function defined on the ball of radius $r$ around $u_0$:
$$ \Phi\colon B_s(u_0,r) \mapsto L^\mu ((0,T); H^{s_0}),$$
where $B_s(u_0,r) = \{ v_0 \in H^s; \| v_0-u_0\|_{H^s} <r\}$, and satisfying the two following properties:
\begin{enumerate} [({H}1)]
\item\label{H1T} {\sc \black{Weak Lipschitz}}: there exists $C_0>0$ such that 
\begin{equation}\label{contract}
\forall v_0, w_0 \in B_s(u_0,r),\quad \lA \Phi(v_0) - \Phi(w_0)\rA_{L^\mu ((0,T); H^{s_0})} \le C_0\lA v_0 - w_0\rA_{H^{s_0}}.
\end{equation}
\item {\sc Tame estimate}: there exists $C_1>0$ such that (with $H^{+\infty} = \cap_s H^s$),
\begin{equation}\label{smooth}
\forall v_0 \in H^{+\infty} \cap B_s(u_0,r), \quad 
\lA \Phi(v_0) \rA _{L^\mu ((0,T); H^{s_1})} \le C_1 
\lA v_0\rA _{H^{s_1}}.
\end{equation}
\end{enumerate}

Then we have the three following conclusions:
\begin{enumerate}[i)]
\item {\sc Continuous interpolation}: for all $v_0 \in B_s(u_0, r)$, we have 
$$
\Phi(v_0)\in L^\mu ((0,T); H^s)$$
and the map 
$\Phi: v_0 \in B_s(u_0, r)\mapsto \Phi(v_0) \in L^\mu ((0,T); H^s)$ is continuous. 

\item {\sc Improved regularity in time}: 
for all $v_0 \in B_s(u_0, r)$, $\Phi(v_0)$ automatically belongs to a Chemin-Lerner space, which means that:
$$
\sum_{j\ge 0}\lA \Delta_j\Phi(v_0)\rA_{L^\mu((0,T);H^s)}^2<+\infty,
$$
where $u=\sum_{j\ge 0} \Delta_j u$ denotes the Littlewood-Paley decomposition \black{ of $u$} (see Appendix~\ref{appendix:LP}).

\item {\sc Continuity in time:} Assume that $\mu=+\infty$ and 
suppose in addition that 
for some $v_0\in B_s(u_0, r)$, we have 
\begin{equation}\label{N10}
\Phi(v_0) \in C^0([0,T]; H^{-\infty}).
\end{equation}
Then we also have 
\begin{equation}\label{N11}\Phi(v_0) \in C^0([0,T];H^s).
\end{equation}
\end{enumerate}
\end{theorem}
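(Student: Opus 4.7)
The plan is to obtain conclusions (i) and (ii) as a direct application of the abstract nonlinear interpolation result (Theorem~\ref{Tmain}), and to deduce the time continuity (iii) from the Chemin--Lerner bound produced in (ii) together with the weak continuity assumption \eqref{N10}.

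First, I would recast the hypotheses (H1)--(H2) in the framework of Theorem~\ref{Tmain} by choosing the source scale of Banach spaces to be the Sobolev scale $\{H^\sigma(\mathbb{R}^d)\}$, equipped with the Littlewood-Paley low-frequency projectors as the Friedrichs mollifiers, and the target scale to be $\{L^\mu((0,T); H^\sigma)\}$ with the same spatial mollifiers acting pointwise in time. The weak Lipschitz bound \eqref{contract} supplies the required control at the low endpoint $s_0$, while the tame bound \eqref{smooth} supplies the control at the high endpoint $s_1$. The nonlinear interpolation theorem then delivers simultaneously the intermediate-regularity statement $\Phi(v_0)\in L^\mu((0,T); H^s)$ with continuous dependence on $v_0\in B_s(u_0,r)$ (this is (i)) and the Chemin--Lerner summability (this is (ii)), since this $\ell^2$ frequency-envelope output is precisely what the proof of Theorem~\ref{Tmain} produces from its frequency-envelope construction.

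For (iii), specialize to $\mu=+\infty$ and assume \eqref{N10}; then consider the Littlewood-Paley partial sums $S_N\Phi(v_0)=\sum_{j\le N}\Delta_j\Phi(v_0)$. Each $\Delta_j\Phi(v_0)(t)$ has spectrum supported in a fixed compact annulus, so the continuity \eqref{N10} upgrades automatically to continuity of $t\mapsto \Delta_j\Phi(v_0)(t)$ in every $H^\sigma$; in particular, each partial sum $S_N\Phi(v_0)$ lies in $C^0([0,T]; H^s)$. The Chemin--Lerner bound from (ii) then yields
\begin{equation*}
\sup_{t\in[0,T]}\|\Phi(v_0)(t)-S_N\Phi(v_0)(t)\|_{H^s}^2 \lesssim \sum_{j>N}\|\Delta_j\Phi(v_0)\|_{L^\infty((0,T);H^s)}^2,
\end{equation*}
which tends to $0$ as $N\to\infty$ as the tail of a convergent series. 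Thus $S_N\Phi(v_0)\to\Phi(v_0)$ uniformly on $[0,T]$ in $H^s$, and a uniform limit of continuous functions is continuous, proving \eqref{N11}.

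The main obstacle is entirely concentrated in the application of Theorem~\ref{Tmain}: one must construct a near-optimal $\ell^2$ frequency envelope adapted to the $H^s$ Littlewood-Paley decomposition of $v_0$, and carefully couple the tame estimate applied to the smoothed data $S_N v_0$ with the weak Lipschitz estimate applied to the dyadic differences $S_{N+1}v_0-S_N v_0$, in order to transfer dyadic summability from the initial datum to $\Phi(v_0)$. Once this interpolation machinery delivers (i) and (ii), the deduction of (iii) reduces to the standard uniform-approximation argument above.
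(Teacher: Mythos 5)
Your overall route is essentially the paper's: conclusions (i)--(ii) are obtained by feeding the Littlewood--Paley structure of the Sobolev scale into Theorem~\ref{Tmain}, and your proof of (iii) is exactly the paper's uniform-approximation argument (the spectrally localized pieces $\Delta_j\Phi(v_0)$ are continuous in every $H^\sigma$, and the Chemin--Lerner tail controls $\sup_t\lA \Phi(v_0)-S_N\Phi(v_0)\rA_{H^s}$ by almost orthogonality). The one point where your reduction, as written, does not go through literally is the choice of target scale: for $\mu>2$ the family $\{L^\mu((0,T);H^\sigma)\}$ is \emph{not} a scale of type $(2,L^\mu((0,T);L^2))$ in the sense of Definition~\ref{D3}, because $Lu=(\Delta_j u)_{j}$ is not bounded from $L^\mu((0,T);H^\sigma)$ into $\Sigma^\sigma_2$ (the $\ell^2_j L^\mu_t$ norm is not dominated by the $L^\mu_t\ell^2_j$ norm). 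The paper instead takes the target to be the Chemin--Lerner scale $Y^\sigma$, which is of type $(2,L^\mu((0,T);L^2))$, verifies the hypotheses of Theorem~\ref{Tmain} via the elementary bound $\sup_j\lA\Delta_j f\rA_{L^\mu((0,T);H^r)}\le \lA f\rA_{L^\mu((0,T);H^r)}$ (this is what converts (H1)--(H2), stated in $L^\mu H^{s_0}$ and $L^\mu H^{s_1}$, into the weak norms $\lA\cdot\rA_{Y^{s_0}_\infty}$ and $\lA\cdot\rA_{Y^{s_1}_\infty}$ required by the abstract theorem), and then recovers conclusion (i) from the $Y^s$ output by Minkowski's inequality, which is precisely where the hypothesis $\mu\ge 2$ enters. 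Your sketch never uses $\mu\ge2$, which signals this missing step; once the target scale is corrected and these two one-line observations are added, your argument coincides with the paper's proof.
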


\begin{remark}
To avoid possible confusion in notations, let us emphasize that a key point is that 
the constants $C_0$ and $C_1$ may depend on the radius~$r$ and $u_0$. 
In particular, the estimate \e{contract} holds provided that there exists a non-decreasing function $\mathcal{F}_0\colon\xR_+\to\xR_+$ such that, for all $v_0, w_0$ in $H^{+\infty}\cap B_s(u_0,r)$,
$$
\lA \Phi(v_0) - \Phi(w_0)\rA_{L^\mu ((0,T); H^{s_0})} \le \mathcal{F}_0\big(\lA v_0\rA_{H^s}+\lA w_0\rA_{H^{s}}\big)\lA v_0 - w_0\rA_{H^{s_0}}.
$$
Similarly, the estimate \e{smooth} holds provided that there exists a non-decreasing function $\mathcal{F}_1\colon\xR_+\to\xR_+$ such that, for all $v_0$ in $H^{+\infty} \cap B_s(u_0,r)$,
$$
\lA \Phi(v_0) \rA _{L^\mu ((0,T); H^{s_1})} \le \mathcal{F}_1\big(\lA v_0\rA_{H^s}\big)
\lA v_0\rA _{H^{s_1}}.
$$
\end{remark}
\begin{remark}
If one knows that $ \Phi$ is continuous from $B_s(u_0,r)$ to 
$L^\mu ((0,T); H^{s_0})$, then it is enough to assume that the 
\black{weak Lipschitz} Assumption~$\ref{H1T}$ holds for smooth initial data, namely that there exists $C_0>0$ such that
\begin{equation}\label{contract3}
\forall v_0, w_0 \in H^{+\infty}\cap B_s(u_0,r),\quad \lA \Phi(v_0) - \Phi(w_0)\rA_{L^\mu ((0,T); H^{s_0})} \le C_0\lA v_0 - w_0\rA_{H^{s_0}}.
\end{equation}
\end{remark}
\begin{remark}
We shall prove a stronger result, which allows for instance to work in Besov spaces, 
and also to consider periodic functions or functions defined on manifolds, as well as other variants. 
\end{remark}
\begin{remark} 
This extends previous results (see in particular Saut~\cite{MR0454374} and Bona and Scott~\cite{MR0393887}) 
in several directions. The key point is that we only assume that $\Phi$ is defined locally. 
The idea is that this setting allows us to consider the case where 
both the time of existence $T$ and the constants $C_0, C_1$ depend on 
the $H^s$ size of the solutions. This is exactly the situation for the flow map of 
a quasilinear equation. Previous results made extra assumptions 
and also assumed that the flow was defined globally and that the 
constants $C_0$ and $C_1$ were uniform or depended on norms in a way that, 
in practical terms, required $\Phi$ to be the flow 
of a semilinear evolution equation (see Remark~\ref{R:2.11} for more details). 
By contrast, the assumptions in Theorem~\ref{princ} readily 
apply to quasilinear systems of evolution equations 
that can be solved by classical iterative schemes: 
the main example is the case of quasilinear systems 
of hyperbolic equations, but one can also consider quasilinear Schrödinger 
flows or water-wave-type systems, for which it is not at all trivial 
to prove continuous dependence with respect to the initial data. 
The second part of our conclusion was classically proved 
by repeating the Sobolev energy estimate (see for instance the proof of Lemma~$4.14$ in \cite{MR2768550}), 
and is quite cumbersome in practice. The above result asserts 
that this holds automatically. To the best of our knowledge, 
for instance in water-wave problems, this property 
was only recently proved and only for some very specific models, see e.g. \cite{AIT}.
\end{remark}

\black{
\begin{remark}
As a corollary of our proofs, one may easily extend the conclusion of the theorem to the full range of intermediate spaces $H^\sigma$ for $s < \sigma < s_1$. Precisely, under the same hypothesis, the conclusion of the theorem holds with $H^s$ replaced by $H^\sigma$, for all initial data $u_0 \in B_s(u_0,r) \cap H^\sigma$. 
\end{remark}
}

We conclude this introduction by discussing closely related 
problems where the main ideas used in the proof of Theorem~\ref{princ} have been introduced: 
the study of the Cauchy problem for hyperbolic equations and the proof of the paralinearization formula for nonlinear expressions.

The study of the well-posedness of the Cauchy problem for quasilinear equations began with the analysis of hyperbolic equations, with  
the celebrated works by Hadamard~\cite{MR0051411}, 
Sobolev~\cite{sobolev1935probleme,Sobolev1938theorem}, Courant, Friedrichs and Lewy \cite{MR1512478}, 
Schauder~\cite{schauder1935anfangswertproblem} or Petrowsky~\cite{petrowsky1937uber} (see the 
book by Benzoni-Gavage and Serre~\cite{MR2284507} for many results and references).  \black{ Kato's work \cite{Kato}, providing the first proofs of results of this type for quasilinear hyperbolic flows, has also been quite influential in this context.}
Many qualitative properties have been  deduced from the identity of weak and strong extensions of differential operators (see~\cite{MR0009701}), also known as Friedrichs' famous {\em weak=strong lemma} (loosely speaking,  
this lemma asserts that weak solutions 
obtained by duality are strong solution). It is from the latter perspective that we shall approach 
interpolation theory for nonlinear functions. In particular, the main technical ingredient of the proof (see Proposition~\ref{L2.13}) can be seen as a {\em weak=strong lemma} for the flow-map of an evolution equation. In addition, the first key inequality (see Lemma~\ref{L:smoothing}) is a simple refinement of the operator norm estimate of the Friedrichs muliplier.

The second point we want to emphasize is that the proof is significantly influenced by the arguments used in the proof of the so-called Meyer multiplier lemma (see the original article by Meyer~\cite{MR0639462} or the book by Alinhac and Gérard~\cite{MR2304160}), which is a classical lemma when proving Bony's paralinearization formula (see~\cite{MR0631751, MR0814548, MR2418072,alazard2023kam} for the composition $F\circ u$ of a smooth function $F$ and a function $u\in H^s(\xR^d)$ for some $s>d/2$). More precisely, the telescopic sum argument as well as the division between high and low frequencies in the proof of Lemma~\ref{L:expdecay} below are reminiscent of the proof of Meyer’s lemma. \black{ In this context one should also note the option of replacing the classical, discrete Littlewod-Paley decomposition with its continuous version, see \cite{tat-wm}, an idea which simplifies the bookkeeping by replacing telescoping sums by integrals, and allows for easier reiterations, particularly in geometric contexts.}

\medskip

\noindent\textbf{Plan of the paper.} 
In Section~\ref{S:2}, we introduce generalized Besov spaces and state our main interpolation result (see Theorem~\ref{Tmain}), from which we will \black{derive} Theorem~\ref{princ}. Theorem~\ref{Tmain} is then proved in Section~\ref{S:3}. 
We then show in Section~\ref{S:4} that Theorem~\ref{princ} can be deduced from our abstract interpolation result. In the appendix, we recall various classical notations and results about the Littlewood--Paley decomposition.

\section{Main result}\label{S:2}

\subsection{Generalized Besov spaces} Our aim will be to extend 
interpolation theory to nonlinear functions acting on scales of spaces that extend beyond Besov spaces (our interest lies more in the convenience of notation rather than strict generalization). To define such generalized Besov scales, three components are required in analogy with Besov spaces: a base space $E$, an index $s\in\xR$ giving the order of smoothness, and an index $q \in [1,+\infty]$ which represents a second-order correction to this order of smoothness. For instance, the Besov space $B^s_{p,q}(\xR^d)$ corresponds to $E=L^p(\xR^d)$, $s \in \mathbb{R}$, and $q \in [1,+\infty]$.

We will prove that, for our purpose, one can replace $L^p(\xR^d)$ by any vector space, not necessarily a Banach space, \black{equipped} 
with a pseudo-norm defined as follows:

\begin{definition}

Consider a vector space $E$ over $\xK=\xR$ or $\xK=\xC$, and a \black{functional} $N\colon E\to \xR_+$. 
We say that $N$ is symmetric, subadditive and point-separating if it satisfies the three following properties:
\begin{enumerate}
\item for all $x\in E$, $N(-x)=N(x)$,
\item $N(x+y)\le  N(x)+  N(y)$, for all $(x,y)\in E\times E$,
\item $N(x) =0$ if and only if $x=0$.
\end{enumerate}
Hereafter, we shall say that such an application $N$ is a pseudo-norm and that $(E,N)$ is a pseudo-normed vector space. We shall denote pseudo-norms 
as usual norms by~$\lA\cdot\rA_E$.
\end{definition}

Several explanations are in order concerning the above definition.

\begin{remark}\label{R1}

$(i)$ Consider a pseudo-normed vector space $(E,\lA\cdot\rA_E)$. 
Then, the \black{functional $d(x,y)=\lA x-y\rA_E$ on $E \times E$}  is a distance on $E$. 
In the following, it is always implicitly assumed that 
$E$ is endowed with the topology induced by this distance. 

$(ii)$ Every norm is a pseudo-norm. 

$(iii)$ One motivation to consider pseudo-norms instead of norms 
comes from the study of local spaces. 
As an example, consider the vector space $L^2_{\rm loc}(\Omega)$ 
where $\Omega\subset\xR^d$ is an open set. 
Denote by $(K_n)_{n\in\xN}$ an exhaustion by compact sets of $\Omega$. Then 
$$
\lA f\rA_{L^2_{\rm loc}(\Omega)}=\sum_{n\in\xN}2^{-n}\frac{\lA f\rA_{L^2(K_n)}}{1+\lA f\rA_{L^2(K_n)}}
$$
is a pseudo-norm on $L^2_{\rm loc}(\Omega)$.

$(iv)$ One can generalize the previous example to include most of the natural Fr\'echet spaces: using the terminology introduced by Hamilton~\cite{MR0656198}, any topological vector space~$E$ equipped with a graded point-separating family of semi-norms $\rho_1\le \rho_2\le \cdots\le \rho_n\le \cdots$ can be seen as a pseudo-vector space for the pseudo-norm
$$
\lA f\rA_{E}=
\sum_{n\in\xN}2^{-n}\frac{\rho_n(f)}{1+\rho_n(f)}\cdot
$$

\end{remark}

Starting with a pseudo-normed vector space $E$,
the next step is to define the associated spaces of sequences.

\begin{definition}[Spaces of exponentially decaying sequences]
Consider a pseudo-normed 
vector space $(E,\lA \cdot\rA_E)$.

$(i)$ Given $s\in \xR$, we denote by $\Sigma^s_\infty(E)$ the vector space 
of those sequences $f=(f_k)_{k\in\mathbb{N}}$ with $f_k\in E$, such that
\be\label{N1}
\lim_{k\to+\infty} 2^{ks}\lA f_k\rA_E=0.
\ee

$(ii)$ Given $s\in\xR$ and $q\in [1,+\infty)$, 
we denote by $\Sigma^s_q(E)$ the vector space 
of those sequences $f=(f_k)_{k\in\mathbb{N}}$ with $f_k\in E$, such that 
\be\label{Np}
\sum_{k\in\xN} 2^{qks}\lA f_k\rA_E^q<+\infty.
\ee
\end{definition}

\black{
The quantities in \eqref{N1}, \eqref{Np} are akin to superimposing a weighted $\ell^q$ structure on 
the space of $E$ valued sequences. For convenience we recall and relate the appropriate 
notations.
}
\begin{notation}
Given $q\in [1,+\infty)$ (resp.\ $q=+\infty$), recall that 
 $\ell^q(\mathbb{N})$ denotes the space of those sequences $(u_k)_{k\in \mathbb{N}}$ such that 
$$
\lA u\rA_{\ell^q(\mathbb{N})}=\big(\sum_{k\in \mathbb{N}}\la u_k\ra^q\big)^{1/q}<+\infty\quad (\text{resp.}\quad 
\lA u\rA_{\ell^\infty(\mathbb{N})}=\sup_{k\in \mathbb{N}}\la u_k\ra<+\infty).
$$
Denote by $c_0(\xN)\subset \ell^\infty(\xN)$ the space of 
those sequences such that ${\displaystyle \lim_{k\to +\infty}\la u_k\ra=0}$. 

Then, given $q\in [1,+\infty)$ (resp.\ $q=+\infty$), 
$f\in \Sigma^s_q(E)$ if and only if $(2^{ks}\lA f_k\rA_E)_{k\in\xN}$ belongs to $\ell^q(\xN)$ (resp.\ $c_0(\xN)$).
\end{notation}

\black{
The $\Sigma^s_q(E)$ scale of spaces can be completed with appropriate end-points as follows:

}
\begin{notation}
Set
$$
\Sigma^\infty_1(E):=\bigcap_{s\in\xR}\Sigma^s_1(E)\quad;\quad \Sigma^{-\infty}_\infty(E):=\bigcup_{s\in\xR}\Sigma^s_\infty(E).
$$
Notice that, for all $s\in\xR$ and all $q\in [1,+\infty]$, there holds
$$
\Sigma^\infty_1(E)\subset \Sigma^s_1(E)\subset \Sigma^s_q(E)\subset \Sigma^s_\infty(E)\subset \Sigma^{-\infty}_\infty(E).
$$
\end{notation}

\black{Topologically, we can think of the $\Sigma^s_q(E)$ spaces as pseudo-normed spaces
as follows:
}
\begin{proposition}\label{P1}
Let $s\in \xR$ and $q\in [1,+\infty)$. Then the quantity 
$$
\lA f\rA_{\n{s}{q}}:= \left(\sum_{k\in\xN} 2^{qks}\lA f_k\rA_E^q\right)^\frac{1}{q}
$$
defines a pseudo-norm on the vector space $\Sigma^s_q(E)$. 
Similarly, the quantity
$$
\lA f\rA_{\n{s}{\infty}}:=\sup_{k\in\xN}2^{ks}\lA f_k\rA_E,
$$ 
defines a pseudo-norm on the vector space $\Sigma^s_\infty(E)$.
\end{proposition}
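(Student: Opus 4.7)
The plan is to verify the three defining properties of a pseudo-norm — symmetry, subadditivity, and point-separation — for the two quantities in question. Since these are essentially weighted $\ell^q$ norms applied to the sequence of nonnegative reals $(\lA f_k\rA_E)_{k\in\xN}$, everything reduces to combining the pseudo-norm axioms of $\lA\cdot\rA_E$ component-wise with the corresponding scalar inequalities on $\ell^q(\xN)$.

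First, I would treat the case $q\in [1,+\infty)$. Symmetry is immediate: applying property (1) of the pseudo-norm $\lA\cdot\rA_E$ component by component gives $\lA -f_k\rA_E=\lA f_k\rA_E$ for all $k$, hence $\lA -f\rA_{\Sigma^s_q}=\lA f\rA_{\Sigma^s_q}$. Point-separation also follows directly: if $\lA f\rA_{\Sigma^s_q}=0$, then the sum of nonnegative terms $2^{qks}\lA f_k\rA_E^q$ vanishes, so each $\lA f_k\rA_E=0$, and by property (3) for $\lA\cdot\rA_E$, each $f_k=0$, i.e.\ $f=0$; the converse is trivial.

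The subadditivity is the only step with any content. Given $f,g\in\Sigma^s_q(E)$, subadditivity of $\lA\cdot\rA_E$ gives the pointwise bound
\[
2^{ks}\lA f_k+g_k\rA_E \;\le\; 2^{ks}\lA f_k\rA_E + 2^{ks}\lA g_k\rA_E
\]
for every $k\in\xN$. Applying the classical Minkowski inequality in $\ell^q(\xN)$ to the two nonnegative sequences $(2^{ks}\lA f_k\rA_E)_k$ and $(2^{ks}\lA g_k\rA_E)_k$, and using monotonicity of $t\mapsto t^q$ on $\xR_+$, yields
\[
\Bigl(\sum_{k}2^{qks}\lA f_k+g_k\rA_E^q\Bigr)^{\!1/q}
\le \Bigl(\sum_{k}2^{qks}\lA f_k\rA_E^q\Bigr)^{\!1/q}
+\Bigl(\sum_{k}2^{qks}\lA g_k\rA_E^q\Bigr)^{\!1/q},
\]
which is exactly $\lA f+g\rA_{\Sigma^s_q}\le \lA f\rA_{\Sigma^s_q}+\lA g\rA_{\Sigma^s_q}$. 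In particular this also shows $f+g\in \Sigma^s_q(E)$, so the vector space structure is consistent.

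The case $q=+\infty$ is handled in the same way, replacing the sum over $k$ by a supremum. Symmetry and point-separation are unchanged (note that $\sup_k 2^{ks}\lA f_k\rA_E=0$ forces each $\lA f_k\rA_E=0$), while subadditivity follows from the pointwise estimate above by taking the sup in $k$ and using $\sup_k(a_k+b_k)\le \sup_k a_k+\sup_k b_k$. The main — essentially the only — obstacle is making sure that Minkowski's inequality on $\ell^q$ can be applied without any homogeneity assumption on $\lA\cdot\rA_E$; this is fine because Minkowski is invoked only on the auxiliary sequences of nonnegative numbers $2^{ks}\lA f_k\rA_E$, for which the usual scalar version is available.
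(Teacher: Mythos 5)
Your proposal is correct and follows the same route as the paper: symmetry and point-separation are checked component-wise from the pseudo-norm axioms of $\lA\cdot\rA_E$, and subadditivity is reduced, via the pointwise triangle inequality in $E$, to the classical Minkowski inequality for the scalar sequences $(2^{ks}\lA f_k\rA_E)_k$ in $\ell^q(\xN)$. The paper's proof is just a compressed version of exactly this argument, so no further comment is needed.
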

\begin{proof}
Let $q\in [1,+\infty]$. It is obvious that $\lA \cdot\rA_{\n{s}{q}}$ is symmetric and point-separating. Now, 
consider two sequences $f,g$ in $\Sigma^s_q(E)$. Since
$$
\lA f+g\rA_{\n{s}{q}}=\lA 2^{ks}\lA f_k+g_k\rA_E\rA_{\ell^q},
$$
we see that the triangle inequality for $\lA \cdot\rA_{\n{s}{q}}$ follows from the triangle inequality for the pseudo-norm $\lA \cdot\rA_E$ and for the classical 
norm $\lA \cdot\rA_{\ell^q(\xN)}$.
\end{proof}

\begin{remark}
If $\lA \cdot\rA_E$ is a norm, then $\lA \cdot\rA_{\n{s}{q}}$ is also a norm 
for all values of the parameters $s\in \xR$ and $q\in [1,+\infty]$. 
\end{remark}

\black{The elements of the $\Sigma^s_q(E)$ will play a role which is similar to the sequence of terms 
in the Littlewood-Paley decomposition of functions
in Besov spaces. In the broader context here, we 
define generalized Besov scales of spaces associated to the base space $E$ as follows:}


\begin{definition}[Generalized Besov scale]\label{D3}
Let $a\in\xR$, $q\in [1,+\infty]$ and 
consider a pseudo-normed vector space $E$. 
Let $(X^s,\lA \cdot\rA_{X^{s}})_{s\ge a}$ be 
a decreasing family of  \black{pseudo-normed}  vector 
spaces, that is a family satisfying
$$
s'\ge s\ge a\quad\Rightarrow\quad X^{s'}\subset X^s \quad \text{with}\quad 
\lA \cdot \rA_{X^{s}}\le 
\lA \cdot \rA_{X^{s'}}.
$$
We say that it is a scale 
of pseudo-normed vector spaces of type $(q,E)$ 
provided that there 
exist two linear mappings: 
$$
L\colon X^a\to \Sigma^a_q(E)\quad,\quad 
R\colon \Sigma^a_q(E)\to X^a
$$ 
such that the three following properties hold:
\begin{enumerate}
\item 
$R(Lu)=u$ for all $u\in X^a$,
\item for all $s\ge a$ and all $f\in \Sigma^s_1(E)$, there holds 
$L(Rf)\in \Sigma^s_1(E)$ together with the estimate
\be\label{N1017}
\lA L(Rf)\rA_{\Sigma^s_1}\le C(s)\lA f\rA_{\Sigma^s_1},
\ee
\item $L(X^s)\subset \Sigma^s_q(E)$ and $R(\Sigma^s_q(E))\subset X^s$, 
for all $s\ge a$. 
Moreover, there exists a constant $C(s)>0$ such that
\be\label{n160}
\begin{aligned}
&\lA Lu\rA_{\n{s}{q}}\le C(s)\lA u\rA_{X^s},\quad \forall u\in X^s,\\
&\lA Rf\rA_{X^s}\le C(s)\lA f\rA_{\n{s}{q}},\quad \forall f\in \Sigma^s_q(E).
\end{aligned}
\ee
\end{enumerate}
\end{definition}

\begin{notation}\label{N:norms}
Consider a scale $(X^s,\lA \cdot\rA_{X^{s}})_{s\ge a}$ 
of type $(q,E)$. Thanks to the correspondence with the space of sequences $\Sigma^s_q(E)$, we can define other functionals 
on $X^s$. Namely, we set
$$
\lA u\rA_{X^s_1}:=
\lA Lu\rA_{\n{s}{1}}\in [0,+\infty],\quad 
\lA u\rA_{X^s_\infty}:=
\lA Lu\rA_{\n{s}{\infty}}\in [0,+\infty).
$$
We also introduce the subspaces
$$
X^s_q:=R(\Sigma^s_q), \quad s\in (a,+\infty],\quad q\in 
[1,+\infty].
$$
\end{notation}
\begin{remark}
$i)$ Notice that we assume that $R\circ L=\Id$ but we 
do not assume that $L\circ R=\Id$. We only assume a weaker version of the latter property (see~\eqref{N1017}); 
this is important for applications. 

$ii)$ Notice that $\lA u\rA_{X^s_1}$ might be infinite for some $u\in X^s$. However, for any $q\in [1,+\infty]$, we have $\lA u\rA_{X^s_1}<+\infty$ for any 
$u\in X^{s'}$ for some $s'>s$. Indeed, 
$Lu\in \Sigma^{s'}_q(E)\subset \Sigma^s_1(E)$.

$iii)$ Since our main theorem applies for nonlinear functionals, we could as well assume that the operators $L$ and $R$ are nonlinear. 
However, in order to keep the exposition streamlined, we refrain from doing so.
\end{remark}
%

\black{We now describe several interesting examples 
of generalized Besov scales:}

\begin{example}\label{Examples}
\begin{enumerate}
\item The Sobolev spaces $X^s=H^s(\xR^d)$ with $s\ge s_0$ (where $s_0$ is any given real number) 
and more generally the 
Besov spaces $B^s_{p,q}(\xR^d)$ with $p\in [1,+\infty)$ and $q\in [1,+\infty]$ are of type $(q,L^p(E))$. In this case the operators $L$ and $R$ are given by the Littlewood-Paley 
decomposition~(see Appendix~\ref{appendix:LP}) whose definition is recalled here. Fix a function $\varphi\in C^\infty_0(\xR^n)$ 
with support in an annulus $\{1/2\le \la \xi\ra\le 2\}$, so that
$$
\sum_{j=1}^\infty\varphi(2^{-j}\xi)=1-\psi(\xi),
\quad \supp\psi\subset\{|\xi|\leq1\}.
$$
We can then decompose any tempered distribution $u$ on $\xR^d$ as $u=\Delta_{0}u+\sum_{j\ge1}\Delta_ju$ with
$$
\Delta_0 u:=\mathcal{F}^{-1}(\psi \mathcal{F} u),
\quad \Delta_j u:=\mathcal{F}^{-1}(\varphi(2^{-j}\cdot)\mathcal{F} u),
\quad j\geq1,
$$
where $\mathcal{F}\colon \mathcal{S}'(\xR^d)\to \mathcal{S}'(\xR^d)$ denotes the Fourier transform.

Then, for $(s,p,q)\in \xR\times [1,+\infty) \times [1,+\infty]$, the Besov norm is defined by 
$$
\lA u\rA_{B^s_{p,q}}:=\bigg(\sum_{j\ge 0}2^{qjs}\lA\Delta_j u\rA_{L^p}^q\bigg)^{\frac{1}{q}}.
$$
Then $(B^s_{p,q})_{s\in\xR}$ is a scale of type $(q,L^p(\xR^d))$ provided that 
the operator $L$ and $R$ are defined as follows. 
Firstly, one sets $L\colon u\mapsto (f_j)_{j\ge 0}$ with $f_j=\Delta_j u$. 
To define $R$, we consider an auxiliary function $\widetilde{\varphi}
\in C^\infty_0(\xR^n\setminus\{0\})$ such that 
$\widetilde{\varphi}(\xi)\varphi(\xi)=\varphi(\xi)$, and then we define the operator $R$ by 
$$
R((f_j)_{j\ge 0})=f_0+\sum_{j\ge 0}\widetilde{\varphi}(2^{-j}D_x)f_j.
$$
Notice that $R$ is well-defined  \black{by the convergence in norm } and that $R(Lu)=u$. 
We do not have $L(Rf)=f$ for a given sequence $f=(f_j)_{j\ge 0}$. However, it is easily verified that
$\lA L(Rf)\rA_{\Sigma^\sigma_r}\le C(\sigma,r)\lA f\rA_{\Sigma^\sigma_r}$ 
for all $\sigma\in\xR$ and all $r\in [1,+\infty]$.

\item Consider the scale of Zygmund spaces, which plays a key role in applications since it extends H\"older spaces. Recall that they are the Besov spaces $C^s_*(\xR^d)=B^s_{\infty,\infty}(\xR^d)$ where $\lA u\rA_{B^s_{\infty,\infty}}=\sup_{j\ge 0}2^{js}\lA \Delta_j u\rA_{L^\infty}$. This scale 
does not enter into the previous framework since the condition~\eqref{N1} would not be satisfied. Instead, one can consider the so-called little H\"older spaces, that is the closure of $C^\infty_0(\xR^d)$ in $C^s_*(\xR^d)$.

\item To prove Theorem~\ref{princ}, we will use 
these spaces with $E=L^\infty((0,T);L^2(\xR^d))$. In this case, we recover the Chemin-Lerner spaces.

\item We can consider spaces of periodic functions, and more generally spaces of functions defined on manifolds. One could also consider local spaces, functions defined on open sets with boundaries or analytic functions.
\end{enumerate}
\end{example}

\subsection{Nonlinear interpolation}
\black{
Now that we have the appropriate notion of generalized Besov scales, we are ready to state our main result.
}
\begin{theorem}\label{Tmain}
Let $E,F$ be two pseudo-vector spaces and let 
$a\in \xR$ and $q\in [1,+\infty]$. 
Consider two scales 
$(X^s,\lA \cdot\rA_{X^{s}})_{s\ge a}$ 
and $(Y^s,\lA \cdot\rA_{Y^{s}})_{s\ge a}$ of type $(q,E)$ and $(q,F)$, respectively. 
Let $s_0,s,s_1$ and $r$ be real numbers such that
$$
a\le s_0<s<s_1,\quad \quad r>0.
$$ 
Consider a mapping $\Phi$ 
defined on a ball of radius $r$ around the origin of $X^s$ with values in $Y^{s_0}$: 
$$
\Phi\colon  B_{s}(0,r) \to Y^{s_0},
$$
where $B_{s}(0,r) = \{ v \in X^s; 
\| v\|_{X^{s}} <r\}$, and satisfying the following two properties:
\begin{enumerate} 
\item Contraction: there exists $C_0>0$ such that,
\begin{equation}\label{ncontractm}
\forall v, w
\in B_{s}(0,r),\quad 
\| \Phi(v) - \Phi(w)\|_{Y^{s_0}_{\infty}} \leq 
C_0\| v - w\|_{X^{s_0}_1}.
\end{equation}
\item Tame estimate: there exists $C_1>0$ such that,
\begin{equation}\label{nsmoothm}
\forall v \in X^\infty_1
\cap B_{s}(0,r),\quad 
\| \Phi(v) \| _{Y^{s_1}_{\infty}} \leq C_1\| v\| _{X^{s_1}_1}.
\end{equation}
\end{enumerate}
Then, we have the following two conclusions:
\begin{enumerate}
\item Interpolation: $\Phi(v)\in Y^s$ for all $v\in B_{s}(0,r)$;
\item Continuity: the mapping $\Phi\colon B_{s}(0, r)\mapsto Y^s$ is continuous. 
\end{enumerate}
\end{theorem}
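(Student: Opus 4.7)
The plan is to combine the $(L,R)$ Littlewood-Paley-like decomposition from Definition~\ref{D3} with a frequency envelope argument, in the spirit of the Meyer multiplier lemma recalled in the introduction. First, given $v \in B_{s}(0,r)$, I would truncate at frequency $n$ by setting $v_n := R\bigl((L(v)_k\,\mathbf{1}_{k \le n})_{k}\bigr)$, so that $v_n \in X^\infty_1$ (hence the tame estimate applies), $v_n \to v$ in $X^s$, and, after a harmless shrink of $r$, $v_n \in B_s(0,r)$ for all $n$. I would then pick a slowly varying envelope $(c_k)_{k \ge 0} \in \ell^q$ with $c_k \ge 2^{ks}\|L(v)_k\|_E$ and $\sum_k 2^{-\delta|j-k|} c_k \lesssim c_j$ for some $0 < \delta < \min(s-s_0,\, s_1-s)$. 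Using \eqref{N1017} and \eqref{n160} this yields
\[
\|v_n\|_{X^{s_1}_1} \lesssim \sum_{k \le n} 2^{k(s_1-s)}\, c_k \lesssim 2^{n(s_1-s)} c_n, \qquad \|v_{n+1}-v_n\|_{X^{s_0}_1} \lesssim 2^{n(s_0-s)} c_{n+1}.
\]

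Next, I would telescope $\Phi(v_n) - \Phi(v_0) = \sum_{m=0}^{n-1}\Delta_m$ with $\Delta_m := \Phi(v_{m+1}) - \Phi(v_m)$, and combine two bounds on each $\Delta_m$: the tame estimate \eqref{nsmoothm} applied to $v_m, v_{m+1}$ separately gives $\|\Delta_m\|_{Y^{s_1}_\infty} \lesssim 2^{m(s_1-s)} c_{m+1}$, while the contraction \eqref{ncontractm} gives $\|\Delta_m\|_{Y^{s_0}_\infty} \lesssim 2^{m(s_0-s)} c_{m+1}$. Reading these at the $j$-th frequency slice,
\[
2^{js}\,\|(L\Delta_m)_j\|_F \lesssim c_{m+1}\, \min\bigl(2^{(m-j)(s_1-s)},\, 2^{(m-j)(s_0-s)}\bigr),
\]
and splitting the $m$-sum at $m=j$ (tame for $m \le j$, contraction for $m > j$) I would obtain
\[
2^{js}\,\|(L\Phi(v_n))_j\|_F \lesssim d_j := \sum_{m \ge 0} 2^{-\eta|j-m|}\, c_{m+1}, \qquad \eta := \min(s-s_0,\, s_1-s) > 0,
\]
with $(d_j) \in \ell^q$ by Young's inequality. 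Running the same estimate on the tail $\sum_{m \ge n}\Delta_m$ shows $(\Phi(v_n))$ is Cauchy in $Y^s$; the contraction identifies its limit with $\Phi(v)$, so $\Phi(v) \in Y^s$ with envelope $(d_j)$.

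For continuity, given $v^{(\ell)} \to v$ in $X^s$ I would pick one envelope $c_k$ that majorizes $v$ and all $v^{(\ell)}$ simultaneously (possible since $Lv^{(\ell)} \to Lv$ in $\Sigma^s_q(E)$), and split
\[
\Phi(v^{(\ell)}) - \Phi(v) = [\Phi(v^{(\ell)}) - \Phi(v^{(\ell)}_N)] + [\Phi(v^{(\ell)}_N) - \Phi(v_N)] + [\Phi(v_N) - \Phi(v)].
\]
The outer pieces are uniformly small in $Y^s$ for $N$ large, by the envelope tail bound from the previous step. With $N$ fixed, the middle piece tends to $0$ in $Y^s$ as $\ell \to \infty$: the smooth data $v^{(\ell)}_N, v_N$ are compared via the contraction in $Y^{s_0}_\infty$ (which goes to $0$) and via the tame bound in $Y^{s_1}_\infty$ (uniformly bounded), and interpolating between these using the same telescoping of the present proof applied to the difference gives convergence in $Y^s$.

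The main technical difficulty, I expect, lies in the envelope bookkeeping: arranging $(c_k)$ with the right log-regularity, handling $L\circ R \ne \mathrm{Id}$ via \eqref{N1017} (so that truncation controls $v_n$ in the \emph{stronger} $\ell^1$-type norms needed by \eqref{nsmoothm}--\eqref{ncontractm}), and choosing a single envelope that works uniformly for $v$ and all nearby $v^{(\ell)}$ in the continuity step. Beyond this careful geometric resummation, the argument requires essentially no further analytic input.
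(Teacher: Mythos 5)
Your overall route is the paper's own: truncation-type smoothing built from $(L,R)$, a frequency envelope, a telescoping sum whose increments are estimated twice (tame bound and contraction) to get off-diagonal exponential decay as in Lemma~\ref{L:expdecay}, Young's inequality, and a $3\eps$ continuity argument interpolating between the $s_0$- and $s_1$-level information; the paper merely organizes this by first reducing to the sequence model (Theorem~\ref{Tprinc}) and uses the envelope $\gamma_n=2^{-n(s_1-s)}\|S_nf\|_{\n{s_1}{1}}$ rather than a two-sided slowly varying majorant. However, one step of your argument fails in the stated generality: you conclude that $(\Phi(v_n))$ is \emph{Cauchy} in $Y^s$ and that ``the contraction identifies its limit with $\Phi(v)$''. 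Neither $F$ nor $Y^s$ is assumed complete (the theorem only requires pseudo-normed spaces, and the paper explicitly allows non-complete ones), so a Cauchy sequence need not converge in $Y^s$. The paper avoids this: the contraction gives $\Phi(S_nf)\to\Phi(f)$ in $\Sigma^{s_0}_\infty(F)$ (Lemma~\ref{L:easyconv}), hence slice by slice in $F$, and the telescoping identity and the exponential-decay bounds are then read coordinatewise and transferred to the already existing object $\Phi(f)$ by passing to the limit in each slice. You need this coordinatewise (Fatou-type) argument on the bounds $2^{js}\|(L\Phi(v_n))_j\|_F\lesssim d_j$, not completeness.

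Two further points are glossed over. First, in the continuity step you claim a single envelope $(c_k)\in\ell^q$ majorizing $v$ and all $v^{(\ell)}$ exists ``since $Lv^{(\ell)}\to Lv$''; this is false in general: already for $q<\infty$, adding to the slice norms of $v$ bumps $\eps_\ell e_\ell$ with $\eps_\ell\to 0$ but $(\eps_\ell)\notin\ell^q$ gives data converging in $\Sigma^s_q$ whose pointwise supremum of slice norms is not in $\ell^q$, and the same persists along any tail in $\ell$. What is true and sufficient (and is what the paper uses) is that the envelopes $c^{(\ell)}$ converge to $c$ in $\ell^q$, giving the uniform tail smallness $\|1_{\ge n}c^{(\ell)}\|_{\ell^q}\le\|1_{\ge n}c\|_{\ell^q}+\|c^{(\ell)}-c\|_{\ell^q}$ that your argument actually requires. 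Second, the ``harmless shrink of $r$'' is not harmless: the conclusion must hold on all of $B_s(0,r)$, and since the constants in \eqref{N1017} and \eqref{n160} exceed $1$, the truncates $v_n=R(S_nLv)$ of a $v$ near the boundary may leave $B_s(0,r)$, where $\Phi$ is undefined. Repair it either by noting $v_n\to v$ in $X^s$, so $v_n\in B_s(0,r)$ for $n\ge n_0(v)$ (with $n_0$ locally uniform for the continuity step) and starting the telescoping there, or by reducing to the sequence model, where the truncation $S_n$ is norm-nonincreasing and preserves the ball centered at the origin.
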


\begin{remark}\label{R:2.11}
Several remarks are in order. 

Let us insist on the fact that the function $\Phi$ is not assumed to be linear. 
Compared to the nonlinear results 
by Tartar~\cite{MR0310619}, Bona and Scott~\cite{MR0393887} or Maligranda~\cite{MR1024942} the main differences are the following.
\begin{enumerate}
\item The assumptions of Theorem~\ref{Tmain} are local 
(the function $\Phi$ is assumed to be defined only on a neighborhood 
of the origin). This is essential since many nonlinear functionals are defined only on small balls (such as the flow map for a quasilinear evolution equation: indeed, for evolution equations, the 
flow map is defined on the whole space \black{if and only if} the Cauchy problem is well-posed globally in time). To prove such a local result necessitates a different proof \black{ compared to a corresponding global result.} Indeed, our proof is entirely distinct. It is direct, self-contained, and, in particular, does not rely on any results from the theory of interpolation. Of course, one can deduce a global result \black{ from our local result}.

Also, in the previous papers already mentioned, it is assumed that $\Phi$ satisfies the much stronger condition:
$$
\| \Phi(v) - \Phi(w)\|_{Y^{s_0}}
\le 
f(\lA v\rA_{X^{s_0}},\lA w\rA_{X^{s_0}})\| v - w\|_{X^{s_0}}.
$$
\black{However, for applications to quasilinear equations, this assumption is never satisfied. Instead, one proves 
$$
\| \Phi(v) - \Phi(w)\|_{Y^{s_0}} \leq 
f(\lA v\rA_{X^{s}},\lA w\rA_{X^{s}})\| v - w\|_{X^{s_0}},
$$
for some $s$ larger than $s_0$.  For instance in the case of nonlinear hyperbolic systems one typically chooses $s_0=0$ for an $L^2$-estimate, and $s>d/2+1$ so that the $H^s$-norm controls some Lipschitz norms of $v$ and $w$.}

\item We \black{only} assume weak bounds, in the sense that we estimate $\Phi(v)-\Phi(w)$ (resp.\  
$\Phi(v)$) in $Y^{s_0}_\infty$ (resp.\ $Y^{s_1}_\infty$). This \black{fact} will be used to prove that 
the flow-map in Theorem~\ref{princ} automatically satisfies Chemin-Lerner estimates.

\item We do not assume that $E$ or $F$ are normed vector spaces. Notice also that we do not assume that $E$ or $F$ are complete with respect to the natural distances mentioned in Remark~\ref{R1}.

\item The tame estimate~\e{nsmooth} is assumed to hold only for functions in the smallest set $X^\infty_1$. In practical terms, this implies that it suffices to establish a priori estimates for smooth functions.
\end{enumerate}
\end{remark}

\section{Proof of \black{the main result}}\label{S:3}
\black{ This section is devoted to the proof of the main result in Theorem~\ref{Tmain}.} From now on we fix three real numbers $s_0<s<s_1$ and 
$q\in[1,+\infty]$.

\subsection{Reduction to the model case}

Let $E,F$ be two pseudo-vector spaces and let 
$q\in [1,+\infty]$. 
Consider two scales \black{ of spaces} 
$(X^s,\lA \cdot\rA_{X^{s}})_{s\in [a,b]}$ 
and $(Y^s,\lA \cdot\rA_{Y^{s}})_{s\in [a,b]}$ of type $(q,E)$ and $(q,F)$, respectively. 

We start by observing that it is sufficient to prove Theorem~\ref{Tmain} in the special case where 
$$
(X^s,\lA \cdot\rA_{X^{s}})
=\big(\Sigma^s_q(E),\lA\cdot\rA_{\n{s}{q}(E)}\big),
\quad 
(Y^s,\lA \cdot\rA_{Y^{s}})=\big(\Sigma^s_q(F),\lA\cdot\rA_{\n{s}{q}(F)}\big).
$$
Indeed, it follows from Definition~\ref{D3} that 
there exists two pairs of linear mappings 
$(L_X,R_X)$ and $(L_Y,R_Y)$ such that
\begin{alignat*}{2}
&L_X\colon X^a\to \Sigma^a_q(E)\quad, \quad &&R_X\colon \Sigma^a_q(E)\to X^a,\\
&L_Y\colon Y^a\to \Sigma^a_q(F)\quad, \quad &&R_Y\colon \Sigma^a_q(F)\to Y^a,
\end{alignat*}
and such that:
\begin{enumerate}
\item 
$R_X(L_X(u))=u$ for all $u\in X^a$,
\item 
$R_Y(L_Y(u))=u$ for all $u\in Y^a$,
\end{enumerate}
together with the estimates \e{N1017} and~\e{n160}. 
Now, introduce the (nonlinear) \black{operator} 
\[
\Phi_\Sigma\colon \Sigma^s_q(E)\to \Sigma^{s_0}_q(F)
\]
defined by
\[
\Phi_\Sigma(f):=L_Y\big(\Phi(R_X(f))\big).
\]
Since $R_Y\circ L_y=\Id$ and similarly $R_X\circ L_X=\Id$, 
we see that  for $u \in X^s,$ 
$$
\Phi(u)=R_Y\big(\Phi_\Sigma(L_X(u))\big).
$$
By the continuity and regularity properties 
of the operators $L_X$, $R_X$, $L_Y$ and $R_Y$,
the two conclusions of Theorem~\ref{Tmain} for the operator $\Phi$ in the 
ball $B_s(u_0,r)$
are equivalent to the same conclusions applied to the operator $\Phi_{\Sigma}$ 
in the open set 
$R_X^{-1}(B_s(u_0,r)) \subset \Sigma^s_q(E)$. 
In a similar fashion, the properties in the hypothesis of Theorem~\ref{Tmain} 
for the operator $\Phi$ in $B_s(u_0,r)$ imply the same properties 
for $\Phi_{\Sigma}$ in 
$R_X^{-1}(B_s(u_0,r))$. Indeed, let us prove that $\Phi_\Sigma$ 
satisfies the following estimates:
\begin{align}
&\forall v, w
\in B_{s,q}(0,r),\quad 
\| \Phi(v) - \Phi(w)\|_{\n{s_0}{\infty}(F)} \leq 
C_0\| v - w\|_{\n{s_0}{1}(E)},\label{ncontract1}\\
&\forall v \in \Sigma^\infty_1(E) \cap B_{s,q}(0,r),\quad 
\| \Phi_\Sigma(v) \| _{\n{s_1}{\infty}(F)} \leq C_1\| v\| _{\n{s_1}{1}(E)}.\label{nsmooth1}
\end{align}
To prove \e{nsmooth1}, we write
\begin{alignat*}{2}
\| \Phi_\Sigma(v) \| _{\n{s_1}{\infty}(F)}
&=
\blA L_Y\big(\Phi(R_X(f))\big)\brA _{\n{s_1}{\infty}(F)}
&& (\text{by definition of $\Phi_\Sigma$})\\
&=\| \Phi(R_X(f)) \| _{Y^{s_1}_{\infty}} \qquad 
&&(\text{see Notation~\ref{N:norms}})\\
&\le C_1\| R_X(f)\| _{X^{s_1}_{1}}
&&(\text{by assumption~\eqref{nsmoothm}})\\
&\le C_1\| L_X(R_X(f))\|_{\Sigma^{s_1}_{1}(E)} \qquad 
&&(\text{see Notation~\ref{N:norms}})\\
&\les_{s_1} C_1 \| f\|_{\Sigma^{s_1}_{1}(E)} \qquad 
&&(\text{by assumption~\eqref{N1017}}),
\end{alignat*}
and we use similar arguments to get~\e{ncontract1}. 

Consequently, given an element $u \in B_s(u_0,r)$, it 
suffices to prove the result 
for $\Phi_\Sigma$ in a small ball around $L_X u$.
Namely, it follows that Theorem~\ref{Tmain} will be a consequence of the 
following result, which will be proved in this section.

\begin{theorem}\label{Tprinc}
Let $E$ and $F$ be two pseudo-normed vector spaces. 
Consider three real numbers $s_0<s<s_1$, 
together with $q\in [1,+\infty]$ and $r>0$. 
Consider a mapping $\Phi$ 
defined on a ball of radius $r$ around the origin of $\Sigma^s_q(E)$: 
$$
\Phi\colon  B_{s,q}(0,r) \to \Sigma^{s_0}_\infty(F),
$$
where $B_{s,q}(0,r) = \{ v \in \Sigma^s_q(E); \| v\|_{\n{s}{q}(E)} <r\}$. 

Assume that $\Phi$ satisfies the following two properties:
\begin{enumerate} 
\item  \black{ Weak Lipschitz}: there exists $C_0>0$ such that,
\begin{equation}\label{ncontract}
\forall v, w
\in B_{s,q}(0,r),\quad 
\| \Phi(v) - \Phi(w)\|_{\n{s_0}{\infty}(F)} \leq 
C_0\| v - w\|_{\n{s_0}{1}(E)}.
\end{equation}
\item Tame estimate: there exists $C_1>0$ such that,
\begin{equation}\label{nsmooth}
\forall v \in \Sigma^\infty_1(E) \cap B_{s,q}(0,r),\quad 
\| \Phi(v) \| _{\n{s_1}{\infty}(F)} \leq C_1\| v\| _{\n{s_1}{1}(E)}.
\end{equation}
\end{enumerate}
Then, we have the following two conclusions:
\begin{enumerate}
\item Interpolation: $\Phi(v)\in \Sigma^s_q(F)$ for all $v\in B_{s,q}(0,r)$;
\item Continuity: the mapping $\Phi\colon B_{s,q}(0, r)\mapsto \Sigma^s_q(F)$ is continuous. 
\end{enumerate}
\end{theorem}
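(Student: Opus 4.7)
The plan is to run a frequency-envelope / truncation argument directly on the sequence spaces $\n{s}{q}(E)$ and $\n{s}{q}(F)$, in the spirit of Tao's frequency envelopes and the Meyer multiplier / telescoping sum technique mentioned in the introduction. The key idea is to split any input $v\in B_{s,q}(0,r)$ into a low-frequency piece, on which the tame estimate \eqref{nsmooth} applies, and a high-frequency tail, on which the weak Lipschitz bound \eqref{ncontract} applies.

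First I would introduce the truncation operator $S_N$ on sequences, defined by $(S_Nf)_k := f_k$ for $k\le N$ and $(S_Nf)_k := 0$ otherwise. Elementary H\"older-type estimates in~$k$ give
\[
\lA S_N v \rA_{\n{s_1}{1}(E)} \les 2^{N(s_1-s)}\lA v \rA_{\n{s}{q}(E)}, \qquad \lA v - S_N v\rA_{\n{s_0}{1}(E)} \les 2^{-N(s-s_0)} \lA v\rA_{\n{s}{q}(E)},
\]
together with $S_N v \in \n{\infty}{1}(E)\cap B_{s,q}(0,r)$. Decomposing $\Phi(v) = \Phi(S_N v) + (\Phi(v) - \Phi(S_N v))$, applying the tame bound to $\Phi(S_N v)$ and the weak Lipschitz bound to the pair $(v,S_Nv)$, and extracting the $k$-th component (which is controlled by $\n{\cdot}{\infty}$-norms) yields
\[
\lA [\Phi(v)]_k\rA_F \le C_1\, 2^{-k s_1} \lA S_N v\rA_{\n{s_1}{1}(E)} + C_0\, 2^{-k s_0} \lA v - S_N v\rA_{\n{s_0}{1}(E)}.
\]
Optimizing by choosing $N=k$, setting $a_j := 2^{js}\lA v_j\rA_E$ and $\alpha := \min(s_1-s,\, s-s_0) > 0$, this collapses into the pointwise convolution bound
\[
2^{ks} \lA [\Phi(v)]_k \rA_F \les \sum_{j\ge 0} 2^{-\alpha\la k-j\ra}\, a_j.
\]
Young's inequality for discrete convolution transfers $(a_j)\in \ell^q(\xN)$ (i.e.\ $v\in\n{s}{q}(E)$) into the same membership for the left-hand side with comparable norm, which is conclusion~(1) together with the quantitative estimate $\lA\Phi(v)\rA_{\n{s}{q}(F)} \les \lA v\rA_{\n{s}{q}(E)}$. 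The $q=+\infty$ endpoint needs only the additional observation that convolution with an $\ell^1$ kernel sends $c_0(\xN)$ to $c_0(\xN)$.

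For conclusion~(2), fix $v\in B_{s,q}(0,r)$ and a sequence $v^{(n)}\to v$ in $\n{s}{q}(E)$ with $v^{(n)}\in B_{s,q}(0,r)$. Given $\eps>0$, I would split the norm at a threshold $K$ and treat the two pieces separately. For $k\le K$, the weak Lipschitz hypothesis gives $2^{ks}\lA[\Phi(v^{(n)})-\Phi(v)]_k\rA_F \le C_0\,2^{k(s-s_0)}\lA v^{(n)}-v\rA_{\n{s_0}{1}(E)}$, and the elementary embedding $\n{s}{q}(E)\hookrightarrow \n{s_0}{1}(E)$ (via H\"older) ensures that this block converges to $0$ as $n\to\infty$ for any fixed $K$. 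For $k > K$, applying the envelope bound above separately to $\Phi(v^{(n)})$ and $\Phi(v)$ and using the triangle inequality gives
\[
2^{ks} \lA [\Phi(v^{(n)}) - \Phi(v)]_k \rA_F \les (K_\alpha \ast a^{(n)})_k + (K_\alpha \ast a)_k,
\]
with $a^{(n)}_j := 2^{js}\lA v^{(n)}_j\rA_E$ and $K_\alpha(\ell):=2^{-\alpha\la\ell\ra}$. Since $a^{(n)}\to a$ in $\ell^q(\xN)$ and convolution with $K_\alpha$ is continuous on $\ell^q(\xN)$, a standard dominated-type argument makes the $\ell^q$-tails of $K_\alpha \ast a^{(n)}$ uniformly small in $n$ once $K$ is large, so the high-frequency block is smaller than $\eps$ uniformly in $n$. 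Letting $n\to\infty$ for this fixed $K$ concludes continuity.

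The main technical obstacle is the pointwise convolution envelope bound itself: it is where the two hypotheses, of very different flavor (tame in the strongest norm, weak Lipschitz in the weakest norm), are glued into a single frequency-envelope estimate at the arbitrary intermediate regularity~$s$. Once this mechanism is in place, the continuity step is the usual low-frequency/high-frequency split, and what remains — keeping the truncated sequences inside the ball $B_{s,q}(0,r)$, treating the $q=+\infty$ endpoint where the target is a $c_0$-space rather than $\ell^\infty$, and verifying membership rather than mere boundedness — is routine bookkeeping.
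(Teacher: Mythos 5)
Your proposal is correct, and it reaches both conclusions by a route that is organized differently from the paper's. The paper writes $\Phi(f)-\Phi(S_nf)$ as the telescoping series $\sum_{p\ge n}\bigl(\Phi(S_{p+1}f)-\Phi(S_pf)\bigr)$ --- which first requires Lemma~\ref{L:easyconv} (convergence of $\Phi(S_nf)$ to $\Phi(f)$ in $\Sigma^{s_0}_\infty$) to justify the summation --- then proves the off-diagonal exponential decay of each consecutive difference against the envelope $\gamma_n=2^{-n(s_1-s)}\|S_nf\|_{\Sigma^{s_1}_1}$ (Lemmas~\ref{L:smoothing} and~\ref{L:expdecay}), and concludes with Young's inequality; continuity is then obtained by a $3\varepsilon$ argument whose middle term $\Phi(S_nf_N)-\Phi(S_nf)$ is treated via an interpolation inequality between $\Sigma^{s_0}_\infty$ and $\Sigma^{s_1}_\infty$. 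You instead split $\Phi(v)=\Phi(S_kv)+\bigl(\Phi(v)-\Phi(S_kv)\bigr)$ with the truncation level tied to the output frequency $k$, which produces the convolution bound $2^{ks}\|[\Phi(v)]_k\|_F\lesssim\sum_j 2^{-\kappa|k-j|}a_j$ (with $a_j=2^{js}\|v_j\|_E$) in a single step, with no telescoping sum and no limit interchange to justify; and your continuity proof is a low/high split of the output norm, using the weak Lipschitz hypothesis on the finitely many low output frequencies and the envelope bound, applied to $\Phi(v^{(n)})$ and $\Phi(v)$ separately, together with $a^{(n)}\to a$ in $\ell^q$ for uniform control of the high-frequency tails. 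Both arguments share the essential mechanism (Friedrichs truncations, pairing the tame bound below the threshold with the weak Lipschitz bound above it, Young's inequality, and the $c_0$ observation at the endpoint $q=+\infty$), and your bookkeeping of the side conditions (truncations stay in $B_{s,q}(0,r)$ and in $\Sigma^\infty_1$, and $\Sigma^s_q\subset\Sigma^{s_0}_1$) is accurate. The trade-off is that your shorter route does not deliver the extra conclusion of Proposition~\ref{L2.13}, namely the convergence $\Phi(S_nv)\to\Phi(v)$ in $\Sigma^s_q$; that statement is reused in the paper's continuity argument and, more importantly, is the mechanism behind Proposition~\ref{P19}, where the weak Lipschitz bound is only assumed for smooth data and the convergence is recovered by completeness. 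Your argument as written would not directly extend to that weakened hypothesis, but for Theorem~\ref{Tprinc} as stated it is complete and correct.
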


\black{ 
\begin{remark}

We note here that the same arguments in the proof of the Theorem
also imply intermediate mapping properties for all indices $(\sigma,q_1)$ with
\[
s < \sigma < s_1, \qquad 1 \leq q_1 \leq  \infty.
\]
These are as follows:
\begin{enumerate}
\item Interpolation: $\Phi(v)\in \Sigma^\sigma_{q_1}(F)$ for all $v\in \Sigma^\sigma_{q_1}(E) \cap B_{s,q}(0,r)$;
\item Continuity: the mapping $\Phi\colon \Sigma^\sigma_{q_1}(E) \cap B_{s,q}(0, r)\mapsto \Sigma^\sigma_{q_1}(E)$ is continuous. 
\end{enumerate}
\end{remark}
}

Notice that in the previous statement, we did not assume that $E$ or $F$ are normed 
vector spaces. Also, we did not assume that $E$ or $F$ are complete with respect 
to the natural distances mentioned in Remark~\ref{R1}. 
However, when $E$ and $F$ are Banach spaces, it is sufficient to assume that 
the contraction estimate holds for smooth functions.

\begin{proposition}\label{P19}
Assume that inTheorem~\ref{Tprinc},  $F$ is Banach spaces. 
Then the conclusions of Theorem~\ref{Tprinc} still hold if $\Phi$ satisfies~\e{nsmooth} 
together with the following assumption, which is weaker than~\e{ncontract}: 
there exists $C_0>0$ such that,
\begin{equation}\label{ncontractm2}
\forall v, w
\in \Sigma^\infty_1(E)\cap B_{s}(0,r),\quad 
\| \Phi(v) - \Phi(w)\|_{\Sigma^{s_0}_{\infty}(F)} \leq 
C_0\| v - w\|_{\Sigma^{s_0}_1(E)}.
\end{equation}
\end{proposition}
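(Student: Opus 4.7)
The strategy is to reduce Proposition~\ref{P19} to Theorem~\ref{Tprinc} by upgrading the weak contraction hypothesis \eqref{ncontractm2} — which only concerns smooth inputs — to the full contraction \eqref{ncontract} on the entire ball $B_{s,q}(0,r)$. Completeness of $F$, and hence of the weighted $c_0$-type space $\Sigma^{s_0}_\infty(F)$, is precisely what makes the density argument work.

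The first step is smoothing. For any $v\in B_{s,q}(0,r)$, I would introduce the truncations
\[
v^{(n)}:=(v_0,v_1,\dots,v_n,0,0,\dots)\in \Sigma^\infty_1(E).
\]
Truncation does not increase the $\Sigma^s_q$-norm, so $v^{(n)}\in B_{s,q}(0,r)$, and I would check that $v^{(n)}\to v$ in $\Sigma^{s_0}_1(E)$: when $q<\infty$, Hölder's inequality with exponents $q$ and $q'$ gives
\[
\|v-v^{(n)}\|_{\n{s_0}{1}(E)}
=\sum_{k>n}2^{ks_0}\|v_k\|_E
\le \bigg(\sum_{k>n}2^{k q'(s_0-s)}\bigg)^{1/q'}\|v\|_{\n{s}{q}(E)},
\]
and the geometric tail on the right tends to zero since $s_0-s<0$; the case $q=+\infty$ is handled analogously using the decay \eqref{N1}.

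Next, \eqref{ncontractm2} applied to the pair $(v^{(n)},v^{(m)})$ shows that $(\Phi(v^{(n)}))_n$ is Cauchy in $\Sigma^{s_0}_\infty(F)$. Since $F$ is Banach, $\Sigma^{s_0}_\infty(F)$ is a Banach space (a weighted $c_0$-space of $F$-valued sequences), so the Cauchy sequence admits a limit which I denote $\widetilde\Phi(v)$. A routine check using \eqref{ncontractm2} once more shows that $\widetilde\Phi(v)$ is independent of the approximating smooth sequence, that $\widetilde\Phi(v)=\Phi(v)$ for every $v\in \Sigma^\infty_1(E)\cap B_{s,q}(0,r)$, and that passing to the limit in \eqref{ncontractm2} upgrades the smooth contraction to the full estimate \eqref{ncontract} satisfied by $\widetilde\Phi$ on $B_{s,q}(0,r)$. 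Since the tame estimate \eqref{nsmooth} involves only smooth inputs, it holds for $\widetilde\Phi$ with the same constant. Hence $\widetilde\Phi$ satisfies the full hypotheses of Theorem~\ref{Tprinc}, which immediately yields $\widetilde\Phi(v)\in \Sigma^s_q(F)$ for every $v\in B_{s,q}(0,r)$ together with the continuity of $\widetilde\Phi\colon B_{s,q}(0,r)\to \Sigma^s_q(F)$.

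The main obstacle is the identification of $\widetilde\Phi$ with the originally given $\Phi$ on non-smooth inputs. By construction the two agree on $\Sigma^\infty_1(E)\cap B_{s,q}(0,r)$, and if $\Phi$ is already known to be continuous (even just weakly, as a map into $\Sigma^{s_0}_\infty(F)$) then $\Phi(v)=\lim_n \Phi(v^{(n)})=\widetilde\Phi(v)$. In the setting of Proposition~\ref{P19} this identification is implicit: either one redefines $\Phi$ on non-smooth inputs as the above limit, or one invokes such a minimal continuity property of the given $\Phi$. Either way, once the identification is in place, the conclusions of Theorem~\ref{Tprinc} apply verbatim to $\Phi$.
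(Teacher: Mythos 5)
Your argument is correct and is essentially the paper's own: you use the same truncations $v^{(n)}=S_nv$, the same geometric-tail (H\"older) estimate on $\|v-S_nv\|_{\Sigma^{s_0}_1(E)}$, and the completeness of $\Sigma^{s_0}_\infty(F)$ (valid because $F$ is Banach) to obtain convergence of $\Phi(S_nv)$, which is exactly how the paper re-proves Lemma~\ref{L:easyconv} --- the only place where \eqref{ncontract} was used --- via a normally convergent telescoping series. The identification obstacle you flag at the end is genuine but is handled in the same way by the paper: its proof of Proposition~\ref{P19} invokes a continuity of $\Phi$ into the low-norm space (tacit in the statement of the proposition, cf.\ the remark following Theorem~\ref{princ}) in order to write $\Phi(f)-\Phi(S_nf)=\sum_{p\ge n}\big(\Phi(S_{p+1}f)-\Phi(S_pf)\big)$, so the ``minimal continuity property'' you mention is precisely the hypothesis the paper relies on, and your construction of $\widetilde\Phi$ versus the paper's re-proof of the single lemma is only a difference in packaging.
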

Proposition~\ref{P19} is proved in Section~\ref{S:3.6}.
\subsection{Reduction to the case $E=F$} 
We next prove that one can assume without loss of generality that $(E,\lA\cdot\rA_E)=(F,\lA\cdot\rA_F)$. Indeed, we can always reduce the general case to the latter by setting 
$$
B=E\times F\quad\text{equipped with the pseudo-norm}\quad \lA (u,v)\rA_B=\lA u\rA_E+\lA v\rA_F,
$$
and by considering the mapping $\tilde{\Phi}$, acting on sequences $f=(f_k)_{k\in\xN}$ with $f_k=(x_k,y_k)\in B=E
\times F$, by
$$
\tilde{\Phi}(f):=(0,\Phi(x)).
$$
We thus assume in the following that $(E,\lA\cdot\rA_E)=(F,\lA\cdot\rA_F)$ and write simply $\Sigma^s_q$ instead of $\Sigma^s_q(E)$.

\subsection{Friedrichs's mollifiers} 
We now introduce, following Hamilton~\cite{MR0656198}, a version of Friedrichs' mollifiers adapted to the space 
$\Sigma^s_q(E)$. 
Namely, for each $n\in\xN$, one introduces a 
linear map
$$
S_n\colon \Sigma^{-\infty}_\infty\to \Sigma^\infty_1,
$$
such that $S_n$ converges to the identity (in a sense to be made precise) when $n$ goes to $+\infty$. 

To do so, given $n\in\xN$ and a sequence $f=(f_k)_{k\in\xN}$ we define the sequence
\[
S_nf=((S_nf)_k)_{k\in\xN}
\] by
\[
(S_nf)_k:=f_k \quad\text{for}\quad k\le n\quad;\quad (S_nf)_k:=0 
\quad\text{for}\quad k> n.
\]
Since $(S_nf)_k$ vanishes for $k> n$, the sequence $S_nf$ obviously belongs 
to $\Sigma^s_1$ for all $s\in\xR$. The operators $S_n$ are thus smoothing operators. 
Let us recall two classical results.
\begin{lemma}\label{L:smoothing1}
$i)$ For all real numbers $r\le r'$, 
all $n\in\xN$ and all $q\in [1,+\infty]$, there holds
\be\label{n200}
\lA S_n f\rA_{\n{r'}{q}}\le 2^{n(r'-r)}\lA f\rA_{\n{r}{q}}.
\ee
$ii)$ For all $r\in \xR$, all $q\in [1,+\infty]$ and all $f\in \Sigma^r_q(E)$, 
there holds
$$
\lim_{n \to \infty} \lA f-S_n\var\rA_{\n{r}{q}}=0.
$$
\end{lemma}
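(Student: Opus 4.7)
The plan is to prove both parts by direct computation, exploiting the very simple structure of the truncation operator $S_n$: the sequence $S_n f$ coincides with $f$ up to index $n$ and vanishes afterwards, so every estimate reduces to comparing weighted $\ell^q$ norms of the full sequence against those of a truncated/tail portion.

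For part (i), I would treat the cases $q < \infty$ and $q = \infty$ separately but in parallel. In the case $1 \le q < \infty$, I write
\[
\| S_n f \|_{\n{r'}{q}}^{q} = \sum_{k=0}^{n} 2^{qk r'} \| f_k \|_E^q = \sum_{k=0}^{n} 2^{qk(r'-r)} \, 2^{qkr}\| f_k \|_E^q,
\]
and, since $r' \ge r$ and $0 \le k \le n$, the factor $2^{qk(r'-r)}$ is bounded by $2^{qn(r'-r)}$. Pulling it out and extending the sum back to all $k\in\xN$ yields \eqref{n200} after taking $q$-th roots. For $q=\infty$ the same argument works with $\sup$ replacing the sum, noting that $(S_n f)_k = 0$ for $k>n$ so the sup is taken over $0 \le k \le n$.

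For part (ii), I observe that $(f - S_n f)_k = 0$ for $k \le n$ and $(f - S_n f)_k = f_k$ for $k > n$, so
\[
\lA f - S_n f \rA_{\n{r}{q}}^{q} = \sum_{k > n} 2^{qkr} \lA f_k\rA_E^q \qquad (1 \le q < \infty),
\]
\[
\lA f - S_n f \rA_{\n{r}{\infty}} = \sup_{k > n} 2^{kr} \lA f_k \rA_E \qquad (q = \infty).
\]
For $1 \le q < \infty$ the right-hand side is the tail of a convergent series (by the very definition of $\Sigma^r_q(E)$ in \eqref{Np}), hence tends to $0$ as $n\to\infty$. For $q=\infty$, by the defining property \eqref{N1} of $\Sigma^r_\infty(E)$ we have $2^{kr}\lA f_k\rA_E \to 0$, so its sup over the tail $k>n$ tends to $0$.

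No part of this is truly delicate; the only potential subtlety is being consistent about the endpoint $q=\infty$, where the definition of $\Sigma^r_\infty(E)$ is stated in terms of a vanishing-at-infinity condition rather than a summability condition, which is exactly what is needed to make the $q=\infty$ case of part (ii) work. In particular, no completeness or normedness hypothesis on $E$ is used, which is consistent with the pseudo-normed setting of the paper.
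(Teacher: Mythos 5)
Your proof is correct and follows essentially the same route as the paper: the truncation structure of $S_n$ gives \eqref{n200} by bounding $2^{qk(r'-r)}$ by $2^{qn(r'-r)}$ for $k\le n$, and part $ii)$ is the tail of a convergent series for $q<+\infty$ together with the vanishing condition \eqref{N1} for $q=+\infty$. Your explicit treatment of the $q=\infty$ case in part $i)$ is a minor (welcome) addition to what the paper writes, but not a different argument.
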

\begin{proof}
$i)$ Directly from the definitions of $S_n$ and $\lA\cdot\rA_{\n{s}{q}}$, we have
\begin{align*}
\lA S_nf\rA_{\n{r'}{q}}^q=\sum_{k\le n} 2^{qkr'}\lA f_k\rA_E^q\le 
2^{qn(r'-r)}\sum_{k\le n} 2^{qkr}\lA f_k\rA_E^q
\le 2^{qn(r'-r)}\lA f\rA_{\n{r}{q}}^q,
\end{align*}
which implies the desired 
estimate.

$ii)$ When $q<+\infty$, the second statement is 
proved by writing
$$
\lA \var-S_n\var\rA_{\n{r}{q}}^q= \sum_{k\ge n+1}2^{qkr}\lA \var_k\rA_E^q,
$$
and then by observing that the right-hand side in the above inequality converges to $0$ as $n$ goes to $+\infty$ as the remainder of a convergent series. When $q=+\infty$, the results follows from the assumption~\eqref{N1}. 
\end{proof}

Assume that $q<+\infty$ and $r<r'$. The following observation contains a key improvement of \e{n200}. 
We claim that there exists a constant $K>0$  such that, for all $f\in \Sigma^{r}_q(E)$,
\be\label{n201}
\sum_{n\in\xN}2^{-qn(r'-r)}\lA S_n f\rA_{\n{r'}{q}}^q\le K \lA f\rA_{\n{r}{q}}^q.
\ee
To see this, write 
\begin{align*}
\sum_{n\in\xN}2^{-qn(r'-r)}\lA S_n f\rA_{\n{r'}{q}}^q&=\sum_{n\in\xN}2^{-qn(r'-r)}\bigg(\sum_{k=0}^n2^{qkr'}\lA f_k\rA_{E}^q\bigg)\\
&=\sum_{k\in\xN} \bigg(\sum_{n=k}^{+\infty}2^{-qn(r'-r)}\bigg)2^{qkr'}\lA f_k\rA_{E}^q\\
&\les_q \sum_{k\in\xN} 2^{-qk(r'-r)}2^{qkr'}\lA f_k\rA_{E}^q = \lA f\rA_{\n{r}{q}}^q.
\end{align*}
The next lemma further improves~\e{n201} by replacing 
$\lA S_n f\rA_{\n{r'}{q}}$ with the stronger norm $\lA S_n f\rA_{\n{r'}{1}}$. The proof relies on the Young's inequality for sequences, which we recall here to clarify notations. Given 
two sequences $u,v\colon \mathbb{Z}\to \xR$, we denote by $u*v$ the convolution product defined by
$$
(u*v)(n)=\sum_{p\in\mathbb{Z}}u(n-p)v(p).
$$
Given $q\in [1,+\infty)$ (resp.\ $q=+\infty$), 
denote by $\ell^q(\mathbb{Z})$ the space of those sequences $(u_k)_{k\in \mathbb{Z}}$ such that 
$\lA u\rA_{\ell^q}=\big(\sum_{k\in \mathbb{Z}}\la u(k)\ra^q\big)^{1/q}<+\infty$ (resp.\ $\lA u\rA_{\ell^\infty}=\sup_{k\in \mathbb{Z}}\la u(k)\ra<+\infty$). 
Denote by $c_0(\mathbb{Z})$ the sub-space of $\ell^\infty(\mathbb{Z})$ of 
those sequences $(u_k)_{k\in \mathbb{Z}}$ such that $\lim_{\la k\ra\to +\infty}\la u(k)\ra=0$. 
\black{ With these notations we have:} 
\begin{proposition}[Young's inequality]\label{P:Young}
Let $q\in [1,+\infty]$ and consider two sequences $u\in \ell^1(\mathbb{Z})$ and 
$v\in \ell^q(\mathbb{Z})$. Then $u*v\in \ell^q(\mathbb{Z})$ together with the estimate
$$
\lA u*v\rA_{\ell^q}\le \lA u\rA_{\ell^1}\lA v\rA_{\ell^q}.
$$
In addition, if $v\in c_0(\mathbb{Z})$ then $u*v\in c_0(\mathbb{Z})$.
\end{proposition}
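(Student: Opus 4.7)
The plan is to treat the three ranges $q=1$, $q=\infty$, and $1<q<\infty$ separately for the norm inequality, and then handle the $c_0$ statement by an approximation argument.

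First I would dispatch the two endpoints. For $q=\infty$, the triangle inequality applied termwise gives
$$
\la (u*v)(n)\ra \le \sum_{p\in\mathbb{Z}}\la u(n-p)\ra\la v(p)\ra \le \lA u\rA_{\ell^1}\lA v\rA_{\ell^\infty},
$$
uniformly in $n$. For $q=1$, Fubini/Tonelli (nonnegative summands) yields
$$
\sum_{n\in\mathbb{Z}}\la (u*v)(n)\ra \le \sum_{n,p}\la u(n-p)\ra\la v(p)\ra = \lA u\rA_{\ell^1}\lA v\rA_{\ell^1},
$$
after the substitution $m=n-p$.

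For $1<q<\infty$, I would use the standard Hölder trick: writing $q'$ for the conjugate exponent of $q$, split the weight as $\la u(n-p)\ra = \la u(n-p)\ra^{1/q'}\cdot\la u(n-p)\ra^{1/q}$ inside the convolution sum, so that Hölder's inequality yields
$$
\la (u*v)(n)\ra^q \le \Big(\sum_{p}\la u(n-p)\ra\Big)^{q/q'} \sum_{p}\la u(n-p)\ra\la v(p)\ra^q
= \lA u\rA_{\ell^1}^{q/q'}\sum_{p}\la u(n-p)\ra\la v(p)\ra^q.
$$
Summing over $n\in\mathbb{Z}$ and applying Fubini to the last factor gives a bound of $\lA u\rA_{\ell^1}^{q/q'+1}\lA v\rA_{\ell^q}^q$, and the identity $q/q'+1=q$ produces the announced estimate after taking $q$-th roots.

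Finally, for the $c_0$ claim, I would approximate. Given $\eps>0$, choose $N$ large enough that the tail satisfies $\lA u-u_N\rA_{\ell^1}\le \eps$, where $u_N(k)=u(k)\mathbf{1}_{\la k\ra\le N}$. Since $u_N$ is finitely supported, $u_N*v$ is a finite linear combination of translates of $v$, hence belongs to $c_0(\mathbb{Z})$ whenever $v$ does. The $\ell^\infty$ bound already proved gives $\lA (u-u_N)*v\rA_{\ell^\infty}\le \eps \lA v\rA_{\ell^\infty}$, so $u*v$ is an $\ell^\infty$-limit of elements of $c_0(\mathbb{Z})$, which is a closed subspace, and therefore lies in $c_0(\mathbb{Z})$.

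The only step requiring real care is the Hölder manipulation for $1<q<\infty$; the two endpoints and the approximation argument are routine. No deeper obstacle is anticipated since the statement is the classical discrete Young inequality on $\mathbb{Z}$.
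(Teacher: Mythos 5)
Your proof is correct: the termwise bound for $q=\infty$, Tonelli for $q=1$, the H\"older splitting $\la u\ra=\la u\ra^{1/q'}\la u\ra^{1/q}$ with $q/q'+1=q$ for $1<q<\infty$, and the truncation of $u$ combined with the closedness of $c_0(\mathbb{Z})$ in $\ell^\infty(\mathbb{Z})$ for the final claim are all sound, and the same estimates justify the absolute convergence of the sum defining $u*v$. The paper states Proposition~\ref{P:Young} without proof, recalling it only as the classical discrete Young inequality to fix notation, so there is nothing to compare against; your argument is the standard one and is complete.
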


We are now in position to prove the following

\begin{lemma}\label{L:smoothing}
$i)$ For all real numbers $r< r'$ 
and all 
$f\in \Sigma^r_\infty(E)$, there holds
$$
\sup_{n\in\xN}2^{-n(r'-r)}\lA S_n f\rA_{\n{r'}{1}}\le \frac{1}{1-2^{r-r'}} \lA f\rA_{\n{r}{\infty}}.
$$

$ii)$ For all real numbers $r< r'$, all $q\in [1,+\infty)$ and all 
$f\in \Sigma^r_q(E)$, there holds
$$
\Big(\sum_{n\in\xN}2^{-qn(r'-r)}\lA S_n f\rA_{\n{r'}{1}}^q\Big)^{\frac{1}{q}}\le \frac{1}{1-2^{r-r'}} \lA f\rA_{\n{r}{q}}.
$$
\end{lemma}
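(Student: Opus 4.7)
The plan is to reduce both estimates to elementary geometric-series manipulations, combined for part (ii) with Young's convolution inequality (Proposition~\ref{P:Young}). The key structural observation is that since $S_n f$ truncates the sequence at index $n$, we have
$$
\lA S_n f\rA_{\n{r'}{1}} = \sum_{k=0}^{n} 2^{kr'}\lA f_k\rA_E,
$$
and this quantity has a natural discrete-convolution interpretation in the variable $n$.

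For part (i), I would begin by using the definition of $\lA f\rA_{\n{r}{\infty}}$ to obtain the pointwise bound $\lA f_k\rA_E \le 2^{-kr}\lA f\rA_{\n{r}{\infty}}$. Inserting this in the previous identity yields
$$
2^{-n(r'-r)}\lA S_n f\rA_{\n{r'}{1}} \le \lA f\rA_{\n{r}{\infty}}\sum_{k=0}^{n}2^{-(n-k)(r'-r)} \le \frac{\lA f\rA_{\n{r}{\infty}}}{1-2^{r-r'}},
$$
after the substitution $j = n-k$ and summation of the geometric series. Since the bound is uniform in $n$, part (i) follows.

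For part (ii), set $\delta := r'-r > 0$ and introduce the nonnegative sequence $a_k := 2^{kr}\lA f_k\rA_E$ for $k \ge 0$, extended by zero for $k < 0$; then $\lA a\rA_{\ell^q(\mathbb{Z})} = \lA f\rA_{\n{r}{q}}$. A direct rewriting gives
$$
2^{-n\delta}\lA S_n f\rA_{\n{r'}{1}} = \sum_{k=0}^{n}2^{-(n-k)\delta}a_k = (c \ast a)(n), \qquad c(m) := 2^{-m\delta}\text{ for }m \ge 0,\ c(m) := 0\text{ for }m<0.
$$
Proposition~\ref{P:Young} applied to $c \in \ell^1(\mathbb{Z})$ and $a \in \ell^q(\mathbb{Z})$ then gives
$$
\lA c \ast a\rA_{\ell^q(\mathbb{Z})} \le \lA c\rA_{\ell^1(\mathbb{Z})} \lA a\rA_{\ell^q(\mathbb{Z})} = \frac{1}{1-2^{-\delta}} \lA f\rA_{\n{r}{q}},
$$
which is exactly the desired estimate.

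There is no substantive obstacle: once the convolution structure is recognized in part (ii), both statements are immediate. The point of the lemma, compared with the earlier estimate~\e{n201}, is purely the strengthening of the left-hand side from the $\Sigma^{r'}_q$ norm to the $\Sigma^{r'}_1$ norm — a strengthening that costs nothing in the constant because the kernel $c$ already lies in $\ell^1$ with explicit norm $(1-2^{-\delta})^{-1}$, which is precisely what Young's inequality exploits.
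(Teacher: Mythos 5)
Your proof is correct and follows essentially the same route as the paper: the paper also rewrites $2^{-n(r'-r)}\lA S_n f\rA_{\n{r'}{1}}$ as (a bound by) the convolution of the geometric kernel $2^{-p(r'-r)}\mathbf{1}_{p\ge 0}$ with the sequence $2^{pr}\lA f_p\rA_E$, and concludes by Young's inequality (Proposition~\ref{P:Young}), handling both $q<\infty$ and the sup case uniformly. Your separate geometric-series computation for part (i) is just the $\ell^\infty$ instance of that same argument done by hand, so there is no substantive difference.
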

\begin{proof} 
Introduce the sequence $\gamma=(\gamma_n)_{n\in\xN}$ defined by 
$\gamma_n:=2^{-n(r'-r)}\lA S_n f\rA_{\n{r'}{1}}$. Note that
\begin{align*}
\gamma_n&=2^{-n(r'-r)}\sum_{k=0}^n2^{kr'}\lA f_k\rA_E
=\sum_{k=0}^n2^{(k-n)(r'-r)}2^{kr}\lA f_k\rA_E\\
&\le \sum_{k=0}^{\infty}2^{-\la k-n\ra (r'-r)}2^{kr}\lA f_k\rA_E.
\end{align*}
We seek to interpret the last expression as the convolution of two sequences $u,v\colon \mathbb{Z}\to \xR$. To do so, 
set
\begin{alignat*}{4}
u(p)&= 2^{-p(r'-r)} \quad &&\text{for}\quad p\in \xN, \quad &&u(p)= 0
\quad&&\text{for}\quad p\in \mathbb{Z}\setminus\xN,\\
v(p)&= 2^{pr}\lA f_p\rA_E \quad &&\text{for}\quad p\in\xN, \quad &&v(p)= 0
\quad&&\text{for}\quad p\in \mathbb{Z}\setminus\xN.
\end{alignat*}
Notice that $\lA u\rA_{\ell^1}=\sum_{p\in \xN}2^{-p(r'-r)}=1/(1-2^{r-r'})$. 
In addition, for all $n\in \xN$, we have
$$
\gamma_n\le (u*v)(n)=\sum_{p\in\mathbb{Z}}u(n-p)v(p).
$$
As a consequence, the result follows from Proposition~\ref{P:Young}.
\end{proof}

\subsection{Interpolation} 
In this \black{subsection} we will prove the key interpolation property \black{in Theorem~\ref{Tmain}}.

\begin{proposition}\label{L2.13}
Let $\var\in B_{s,q}(0,r)$. Then, under the assumptions of Theorem~\ref{Tprinc}, $\Phi(\var)$ belongs to $\Sigma^s_q(E)$. 
Moreover, 
$\Phi(S_n\var)$ is well-defined for all $n\in\xN$ 
and converges to $\Phi(\var)$ in $\Sigma^s_q(E)$ when 
$n$ tends to $+\infty$.
\end{proposition}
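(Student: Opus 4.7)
The plan is to prove both statements via a single frequency envelope argument. The key observation is that for any $n \in \xN$, the truncation $S_n\var$ lies in $\Sigma^\infty_1 \cap B_{s,q}(0,r)$ (since $\lA S_n\var\rA_{\n{s}{q}} \le \lA \var\rA_{\n{s}{q}} < r$), so both the tame bound \eqref{nsmooth} and the weak Lipschitz bound \eqref{ncontract} applied to the pair $(\var, S_n\var)$ are available. Decomposing
\[
\Phi(\var)_k = \Phi(S_n\var)_k + \bigl(\Phi(\var) - \Phi(S_n\var)\bigr)_k
\]
and using that the $E$-norm of the $k$-th component of any sequence is bounded by its $\Sigma^r_\infty$-norm divided by $2^{kr}$, I obtain, for every $k,n \in\xN$,
\[
2^{ks}\lA \Phi(\var)_k\rA_E \le C_1\, 2^{-k(s_1-s)} \lA S_n\var\rA_{\n{s_1}{1}} + C_0\, 2^{k(s-s_0)}\lA \var-S_n\var\rA_{\n{s_0}{1}}.
\]

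For the interpolation statement, I specialize to the diagonal choice $n=k$. The $\ell^q$-norm over $k$ of the first summand is controlled by $\lA\var\rA_{\n{s}{q}}$ via Lemma~\ref{L:smoothing}(ii) with $r=s$, $r'=s_1$. For the second, I write
\[
2^{k(s-s_0)}\lA \var-S_k\var\rA_{\n{s_0}{1}} = \sum_{j\ge 1} 2^{-j(s-s_0)}\bigl(2^{(k+j)s}\lA\var_{k+j}\rA_E\bigr),
\]
which is a discrete convolution (up to reflection) of the $\ell^q$-sequence $(2^{\ell s}\lA\var_\ell\rA_E)_\ell$ with the $\ell^1$-kernel $(2^{-j(s-s_0)})_{j\ge 1}$, summable since $s>s_0$. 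Young's inequality (Proposition~\ref{P:Young}) then bounds its $\ell^q_k$-norm by $\lA\var\rA_{\n{s}{q}}$, and combining both pieces yields $\Phi(\var)\in \Sigma^s_q(E)$.

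For the convergence $\Phi(S_n\var)\to\Phi(\var)$ in $\Sigma^s_q(E)$, I repeat the splitting but choose the smoothing level adaptively: for each $k$ set $m=\max(k,n)\ge n$ and write
\[
\bigl(\Phi(\var)-\Phi(S_n\var)\bigr)_k = \bigl(\Phi(\var)-\Phi(S_m\var)\bigr)_k + \bigl(\Phi(S_m\var)-\Phi(S_n\var)\bigr)_k.
\]
For $k\le n$ one has $m=n$, so only the first term survives and its $2^{ks}$-weighted $\ell^q_k$-norm is dominated by $2^{n(s-s_0)}\lA\var-S_n\var\rA_{\n{s_0}{1}}$, which tends to $0$ as $n\to\infty$ by the same Young's/H\"older computation as above (this quantity is the shift of the $\ell^q$-sequence produced in Step~1). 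For $k>n$ one has $m=k$, and both terms are handled as in the interpolation step but now summed only over $k>n$; these are tails of $\ell^q$-convergent series, hence vanish as $n\to\infty$. The main technical nuisance I anticipate is the endpoint $q=\infty$, where $\ell^q$-summability must be replaced by the vanishing-at-infinity condition \eqref{N1} defining $\Sigma^s_\infty$; the $c_0$ statement in Proposition~\ref{P:Young} is exactly what is needed to transport this vanishing through the convolution and conclude.
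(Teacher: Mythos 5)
Your argument is correct, but it takes a genuinely different route from the paper's proof. The paper first proves an auxiliary lemma (Lemma~\ref{L:easyconv}) giving $\Phi(S_n\var)\to\Phi(\var)$ in $\Sigma^{s_0}_\infty$, precisely in order to justify the componentwise telescoping identity $(\Phi(\var)-\Phi(S_n\var))_m=\sum_{p\ge n}\big(\Phi(S_{p+1}\var)-\Phi(S_p\var)\big)_m$; it then proves the two-sided exponential decay bound of Lemma~\ref{L:expdecay}, $2^{ms}\|(\Phi(S_{p+1}\var)-\Phi(S_p\var))_m\|_E\le C\,2^{-\kappa|m-p|}(\gamma_p+\gamma_{p+1})$ with $\kappa=\min(s_1-s,s-s_0)$, and concludes by Young's inequality with the kernel $2^{-\kappa|\cdot|}$. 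You avoid any infinite telescoping: you split $\Phi(\var)$ into $\Phi(S_m\var)$ plus a remainder at an adaptively chosen truncation level ($m=k$ for interpolation, $m=\max(k,n)$ for convergence), estimate the smooth piece by the tame bound \eqref{nsmooth} (producing the same envelope $\gamma_k=2^{-k(s_1-s)}\lA S_k\var\rA_{\n{s_1}{1}}$ as in Lemma~\ref{L2}) and the remainder by the weak Lipschitz bound \eqref{ncontract} applied to the pair $(\var,S_m\var)$; Young's inequality enters only through the one-sided geometric kernels $2^{-j(s_1-s)}$ and $2^{-j(s-s_0)}$ (Lemma~\ref{L:smoothing} and your convolution computation), and the $c_0$ clause of Proposition~\ref{P:Young} supplies the decay condition \eqref{N1} at $q=+\infty$, as you note. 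This buys a mild simplification: no preliminary convergence lemma is needed, the interpolation conclusion is obtained directly, and convergence reduces to the vanishing of the tails of the $\ell^q$ (or $c_0$) sequences $\gamma_k$ and $w_k:=\sum_{j\ge1}2^{-j(s-s_0)}2^{(k+j)s}\lA\var_{k+j}\rA_E$. The one step you leave terse, the term $(\Phi(S_k\var)-\Phi(S_n\var))_k$ for $k>n$, is handled by the triangle inequality in $\n{s_1}{\infty}$ plus \eqref{nsmooth} applied to both $S_k\var$ and $S_n\var$, together with $\lA S_n\var\rA_{\n{s_1}{1}}\le\lA S_k\var\rA_{\n{s_1}{1}}$, giving the bound $2C_1\gamma_k$; this is the same computation as the case $m>n$ of Lemma~\ref{L:expdecay}, so nothing is missing. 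What the paper's telescoping structure buys in exchange is reusability: the uniform bound \eqref{n101} is quoted again in the continuity argument, and the series of increments is exactly what survives when the contraction hypothesis is weakened to smooth data only (Proposition~\ref{P19}), whereas your splitting applies \eqref{ncontract} with the rough argument $\var$ itself and would not adapt directly to that setting.
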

\begin{proof}
We begin \black{with} an elementary result which \black{asserts} that $\Phi(S_nf)$ is well-defined and converges to $\Phi(f)$ is 
the larger space $\Sigma^{s_0}_\infty(E)$.

\begin{lemma}\label{L:easyconv}
The sequence $\Phi(S_n\var)$ converges to $\Phi(\var)$ in 
$\Sigma^{s_0}_\infty$ when $n$ tends to $+\infty$.
\end{lemma}
\begin{proof}
Since $\lA S_n f\rA_{\n{s}{q}}\le \lA f\rA_{\n{s}{q}}$ by definition of $S_n$, 
we see that $S_n\var$ belongs to the ball $B_{s,q}(0,r)$ 
for all $n\in \xN$, and hence $\Phi(S_nf)$ is well-defined. 
Then the assumption~\e{ncontract} implies that
$$
\| \Phi(f) - \Phi(S_nf)\|_{\n{s_0}{\infty}} \leq 
C_0\| f - S_nf\|_{\n{s_0}{1}}.
$$
Since $s>s_0$, we have
$$
\| f - S_nf\|_{\n{s_0}{1}}\le \| f - S_nf\|_{\n{s}{q}},
$$
and hence the convergence of 
$\Phi(S_n\var)$ to $\Phi(\var)$ in 
$\Sigma^{s_0}_\infty$ follows directly from the fact that $f \in B_{s,q}(0,r)$ and the second statement in Lemma~\ref{L:smoothing1}.
\end{proof}

\black{In the next lemma we show how to obtain bounds for $\Phi(S_n f)$
in terms of similar bounds for $S_n f$.}

\begin{lemma}\label{L2}
Introduce the sequence 
$\gamma=(\gamma_n)_{n\in \xN}$ defined by 
\be\label{n0}
\gamma_n:=2^{-n(s_1-s)}\lA S_nf\rA_{\Sigma^{s_1}_1}.
\ee

$i)$ Then the mapping $f\mapsto \lA \gamma\rA_{\ell^q(\mathbb{N})}$ 
is a pseudo-norm on $\Sigma^s_q(E)$ equivalent to $\lA \cdot\rA_{\Sigma^s_q}$: 
\be
(1-2^{s-s_1})\lA \gamma\rA_{\ell^q(\mathbb{N})}\le \lA f\rA_{\n{s}{q}}\le \lA \gamma\rA_{\ell^q(\mathbb{N})},\qquad \forall q\in[1,+\infty].\label{n10}
\ee
Moreover, if $q=+\infty$, then $\lim_{n\to+\infty}\gamma_n=0$. 

$ii)$The following estimates hold :
\begin{align}
&\forall n\in\xN,\qquad \| \Phi(S_n\var)\|_{\n{s_1}{\infty}}
\le C_1 2^{n(s_1- s)} \gamma_n,\label{nhigh}\\[1ex]
&\forall n\in\xN,\qquad \|\Phi(S_{n+1}\var) - \Phi(S_n\var) \|_{\n{s_0}{\infty}} \le C_0 2^{-n (s-s_0)} \gamma_{n+1},\label{nlow}
\end{align}
where $C_0$ and $C_1$ are the constants  which appear 
in the assumptions~\eqref{ncontract} and~\eqref{nsmooth} on $\Phi$.
\end{lemma}
\begin{remark}
\black{We remark that the sequence $\gamma_n$ plays the role of a frequency envelope for $f$ in $\Sigma^s_q$, though with only a one sided slowly varying condition,
namely 
\[
\gamma_n \leq 2^{s_1-s} \gamma_{n+1}.
\]}
\end{remark}

\begin{proof}
$i)$ The first inequality in~\eqref{n10} follows from Lemma~\ref{L:smoothing} applied with 
$(r,r')=(s,s_1)$. The second one follows from the following obvious inequality
$$
\forall \ell\in\xN,\quad 2^{\ell s}\lA f_\ell\rA_E\le 2^{-\ell(s_1-s)}
\sum_{k=0}^\ell 2^{ks_1}\lA f_k\rA_E=\gamma_\ell.
$$
Moreover, if $q=+\infty$, the fact that $\lim_{n\to+\infty}\gamma_n=0$ follows from $\ell^1*c_0\subset c_0$ (see Proposition~\ref{P:Young}).  

$ii)$ To prove~\e{nhigh} and~\e{nlow}, we begin by verifying that 
\begin{align}
&\| S_n \var \|_{\n{s_1}{1}} = 2^{n(s_1- s)}\gamma_n,\label{n21}\\
&\| S_{n+1}\var - S_n\var\|_{\n{s_0}{1}}
\leq 2^{ -n(s- s_0) } \gamma_{n+1}.\label{n22}
\end{align}
The first equality follows from the very definition of $\gamma_n$. 
The second inequality is obtained by writing that
\begin{align*}
\| S_{n+1}\var - S_n\var \|_{\n{s_0}{1}}
&=2^{(n+1)s_0}\| \var_{n+1} \|_E
=2^{(n+1)(s_0-s)}2^{(n+1)s}\| \var_{n+1} \|_E\\
&\le 2^{(n+1)(s_0-s)} \gamma_{n+1}\le 
2^{-n(s- s_0) } \gamma_{n+1},
\end{align*}
where we used again the obvious bound $2^{\ell s}\lA f_\ell\rA_E\le \gamma_\ell$. Once~\e{n21} and~\e{n22} are established, the desired estimates \eqref{nhigh} and \eqref{nlow}  
follow directly from the assumptions~\eqref{nsmooth} and~\eqref{ncontract} on $\Phi$.
\end{proof}

The next result asserts that 
the coordinates of the 
sequence  
$\Phi(S_{n+1}\var) - \Phi(S_n\var)$ are 
decaying exponentially away from~$n$. 

\begin{lemma}\label{L:expdecay}
Set
$$
\kappa = \min (s_1-s, s-s_0)\quad\text{and}\quad
C=\max\{C_0,(1+2^{s_1-s})C_1\},
$$
where $C_0$ and $C_1$ are the constants  which appear 
in the assumptions~\eqref{ncontract} and~\eqref{nsmooth} on $\Phi$. 
Then, for all $n\in\xN$ and all $m\in\xN$, 
\be\label{n30}
2^{ms}\|(\Phi(S_{n+1}\var)- \Phi(S_n\var))_m\|_E
\le C 2^{-\kappa |m-n|} (\gamma_n+\gamma_{n+1}).
\ee
\end{lemma}
\begin{proof}
We consider the cases $m\le n$ and $m>n$ separately. 

To handle the case $m \leq n$, we write 
\begin{align*}
 &2^{ms}\| (\Phi(S_{n+1}\var) - \Phi(S_n\var))_m
 \|_E\\
 &\qquad\qquad= 
 2^{m(s-s_0)}2^{ms_0}\| (\Phi(S_{n+1}\var) - \Phi(S_n\var))_m\|_E \\
 &\qquad\qquad\le 
 2^{m(s-s_0)}\sup_{k\in\xN}2^{ks_0}\| (\Phi(S_{n+1}\var) - \Phi(S_n\var))_k\|_E \\ 
 &\qquad\qquad\le 2^{m(s-s_0) }\| \Phi(S_{n+1}\var) - \Phi(S_n\var) \|_{\n{s_0}{\infty}}
 \quad&&(\text{def. of }\lA\cdot\rA_{\n{s_0}{\infty}})\\
 &\qquad\qquad\le C_0 2^{(m-n) (s-s_0) } \gamma_{n+1} \quad &&(\text{cf.\ }\e{nlow})
\end{align*}
and hence the desired inequality \e{n30} is satisfied. 

It remains to consider the case $m > n$. 
For that purpose, we use the bound~\eqref{nhigh} to infer that
\begin{align*}
&2^{ms}\|(\Phi(S_{n+1}\var)-\Phi(S_n\var))_m\|_E\\
&\qquad= 2^{-m(s_1- s)}2^{ms_1}\|(\Phi(S_{n+1}\var)-\Phi(S_n\var))_m\|_E
\\
&\qquad\le 2^{-m(s_1- s)}\sup_{k\in\xN}2^{ks_1}\|(\Phi(S_{n+1}\var)-\Phi(S_n\var))_k\|_E
\\
&\qquad\le 2^{-m(s_1- s)}
\|\Phi(S_{n+1}\var)-\Phi(S_n\var)\|_{\n{s_1}{\infty}}\qquad&&(\text{def. of }\lA\cdot\rA_{\n{s_1}{\infty}})\\
&\qquad\le 2^{-m(s_1- s)}
\big(\lA\Phi(S_{n+1}\var)\rA_{\n{s_1}{\infty}}+\lA \Phi(S_n\var)\rA_{\n{s_1}{\infty}}\big)\qquad&&(\text{cf.\ Prop.\ } \ref{P1})\\
&\qquad\le C_1 2^{-m(s_1- s)} \big(2^{(n+1)(s_1-s)}\gamma_{n+1}+2^{n(s_1-s)}\gamma_n\big)\qquad &&(\text{cf.\ }\e{nhigh}),
\end{align*}
which shows that the desired inequality \e{n30} 
also holds when $m>n$.
\end{proof}

We are now ready to prove Proposition~\ref{L2.13}. The argument relies on a telescoping sum argument. Namely, notice that 
for each index $m\in\xN$ 
we have the following identity in the pseudo-normed vector space $E$:
\begin{equation*}
(\Phi(\var) - \Phi(S_n\var))_m = \sum_{p\ge n}\big(  \Phi(S_{p+1}\var) -  \Phi(S_p\var)\big)_m.
\end{equation*}
The previous summation is rigorously justified since Lemma~\ref{L:easyconv} insures 
that the sequence $(\Phi(S_n\var))_{n\in\xN}$ converges to $\Phi(f)$ in the space $\Sigma_\infty^{s_0}$ and hence, for all $m\in\xN$, there holds 
$\lim_{N\to+\infty}\lA (\Phi(f))_m-(\Phi(S_Nf))_m\rA_E=0$.

Introduce the sequence 
$c=(c_n)_{n\in \xN}$ defined by
\be\label{n1001}
c_n:=\gamma_n+\gamma_{n+1}.
\ee 
Now, it follows from Lemma~\ref{L:expdecay} 
that
\be\label{n140}
\begin{aligned}
2^{ms}\|  ( \Phi(\var) - \Phi(S_n\var))_m \|_E
&\le 2^{ms}\sum_{p\geq n}
\|(\Phi(S_{p+1}u)-\Phi(S_pu))_m
\|_E\\
&\le C \sum_{p\geq n} 2^{-\kappa |m-p|} c_p.
\end{aligned}
\ee

Let us now deduce from this that
\be\label{n1000}
\lim_{n\to+\infty}\|\Phi(\var) - \Phi(S_n\var) \|_{\n{s}{q}}=0.
\ee
First, we treat the case $q=+\infty$. Since the sequence $c=(c_p)_{p\in\xN}$ converges to $0$ when $p$ goes to $+\infty$, we easily verify that
$$
\lim_{m\to+\infty}\sum_{p\geq 0} 2^{-\kappa |m-p|} c_p=0,
$$
and so \e{n140} implies 
that $\Phi(\var) - \Phi(S_n\var)$ belongs to $\Sigma^s_\infty$ for all $n$. Let us prove that, in addition,  $\lim_{n\to+\infty}\|\Phi(\var) - \Phi(S_n\var) \|_{\n{s}{\infty}}=0$. To do so, consider $\epsilon>0$. Using again the fact that the sequence $c=(c_p)_{p\in\xN}$ converges to $0$ when $p$ goes to $+\infty$, we obtain that there exists $n_0\in \xN$ such that, for all $n\ge n_0$ and all $m\in\xN$,
$$
\sum_{p\geq n} 2^{-\kappa |m-p|} c_p\le \Big(\sum_{p\geq n} 2^{-\kappa |m-p|}\Big)\epsilon
\le \beta\epsilon \quad\text{where}\quad \beta=\sum_{b\in\mathbb{Z}}2^{-\kappa \la b\ra}=\frac{2}{1-2^{-\kappa}}.
$$

It remains to treat the case $q<+\infty$. 
Using  Young's inequality $\ell^1 *\ell^q\subset \ell^q$ 
(see Proposition~\ref{P:Young}), we deduce from~\e{n140} that
\be\label{n101}
\begin{aligned}
&\|\Phi(\var) - \Phi(S_n\var) \|_{\n{s}{q}}
\le 
AC\Big(\sum_{p\ge n}c_p^q\Big)^\frac{1}{q}\quad\text{where}\\
&A=\sup_{m\in\xN}\sum_{p\in\xN}2^{-\kappa\la p-m\ra}\le \sum_{p\in\mathbb{Z}}2^{-\kappa\la p\ra}=\frac{2}{1-2^{-\kappa}}\cdot
\end{aligned}
\ee 
This implies 
the desired result since $\sum_{p\ge n}c_p^q$ goes to $0$ when $n$ tends to $+\infty$, as $c\in \ell^q(\xN)$ (see~\e{n10}).

This concludes the proof of Proposition~\ref{L2.13}.
\end{proof}

\subsection{Continuity}

So far, we have proved that $\Phi(f)$ belongs to $\Sigma^s_q$ for any $f\in B_{s,q}(0,r)$. 
It remains to prove that the mapping $\Phi\colon B_{s,q}(0,r)\to \Sigma^s_q$ is continuous. 

Consider $f\in \Sigma^s_q$ and a sequence $(f_N)_{N\in\xN}$ converging to $f$ in $\Sigma^s_q$. We want to prove that $\Phi(f_N)$ converges to $\Phi(f)$ in $\Sigma^s_q$ as $N$ tends to $+\infty$. The proof is based on a $3\eps$-argument. Namely, it relies on the following triangle inequality, applied with $n$ large enough:
\be
\begin{aligned}\label{3eps}
\| \Phi(f_{N}) - f \|_{\n{s}{q}}
&\leq 
\| \Phi(f_N) - \Phi(S_nf_N)\|_{\n{s}{q}} \\
&\quad+  \| \Phi(S_nf_N) - \Phi(S_n\var)\|_{\n{s}{q}} \\
&\quad+  \| \Phi(S_n\var) - f\|_{\n{s}{q}}.
\end{aligned}
\ee
As a direct corollary of the inequality~\e{n101}, we will see 
that the first and the third terms go to $0$ when $n$ tends to $+\infty$, 
\black{uniformly in $N$}. Then, the second term will be analyzed 
by observing that, once $n$ is fixed, the term 
$\Phi(S_nf_N) - \Phi(S_n\var)$ goes to $0$ in $\n{s}{q}$ 
as $N$ tends to $0$ by an elementary interpolation argument (since this difference converges to $0$ in $\n{s_0}{\infty}$ and is bounded in $\n{s_1}{q}$).

Let us now proceed \black{with} the details. 
Introduce the sequences
$$
\gamma=(\gamma_{n})_{n\in\xN},\quad 
c=(c_{n})_{n\in\xN},\quad \gamma_N=(\gamma_{N,n})_{n\in\xN},\quad
c_N=(c_{N,n})_{n\in\xN}\quad\text{with}\quad N\in\xN,
$$
defined by
\begin{alignat*}{3}
&\gamma_{n}:=2^{-n(s_1-s)}\lA S_n f\rA_{\n{s_1}{1}}\quad&&\text{and}\quad
&&c_{n}:=\gamma_{n+1}+\gamma_{n},\\
&\gamma_{N,n}:=2^{-n(s_1-s)}\lA S_n f_N\rA_{\n{s_1}{1}}\quad&&\text{and}\quad
&&c_{N,n}:=\gamma_{N,n+1}+\gamma_{N,n}.
\end{alignat*}
It follows from the triangle inequality applied in $\Sigma^{s_1}_1$ that
$$
\la \gamma_{N,n}-\gamma_n\ra\le 2^{-n(s_1-s)}\lA S_n (f_N-f)\rA_{\n{s_1}{1}}.
$$
Therefore, 
Lemma~\ref{L:smoothing} implies that
$$
\lA \gamma_N-\gamma\rA_{\ell^q}\le \frac{1}{1-2^{s-s_1}}\lA f_N-f\rA_{\n{s}{q}},
$$
and hence 
$\gamma_N$ 
converges in $\ell^q(\xN)$ to $\gamma$ as $N$ tends to $+\infty$. Consequently, $c_N$ 
converges in $\ell^q(\xN)$ to $c$ as $N$ tends to $+\infty$. 

Now, notice that, since for $N$ large enough we have 
$f_{N}\in B_{s,q}(0,r)$, we can apply the estimate~\e{n101} with $\var$ replaced by $\var_{N}$, 
to write 
$$
\| \Phi(\var_N) - \Phi(S_n\var_N)\|_{\n{s}{q}} \les  \lA 1_{\ge n} c_{N}\rA_{\ell^q},
$$
where $1_{\ge n}=(1_{\ge n}(p))_{p\in\xN}$ is the sequence defined by $1_{\ge n}(p)=1$ provided that $p\ge n$, and $1_{\ge n}(p)=0$ otherwise. Notice that this inequality holds both for $q<+\infty$ and $q=+\infty$. 
So, by using the triangle inequality in $\ell^q(\xN)$, 
\be\label{n51}
\| \Phi(\var_N) - \Phi(S_n\var_N)\|_{\n{s}{q}} \les  \lA 1_{\ge n} c_{N}\rA_{\ell^q}\le \lA 1_{\ge n} c\rA_{\ell^q}+\lA  c_{N}-c\rA_{\ell^q}.
\ee

Let $\eps>0$. Since 
$1_{\ge n} c$ goes to 
$0$ in $\ell^q(\xN)$ as $n$ tends to $+\infty$ (as already seen) and since $c_N-c$ goes to $0$ in $\ell^q(\xN)$ as $N$ tends to $\infty$, one can find $n_0$ and $N_0$ such that,
$$
\| \Phi(\var_N) - \Phi(S_n\var_N)\|_{\n{s}{q}}\le \eps
$$
for all $n\ge n_0$ and all $N\ge N_0$. Moreover, 
since (see Proposition~\ref{L2.13})
$$
\lim_{n\to+\infty}\|\Phi(\var) - \Phi(S_n\var) \|_{\n{s}{q}}=0,
$$
one can further assume that
$$
\| \Phi(\var) - \Phi(S_n\var)\|_{\n{s}{q}}\le \eps,
$$
for all $n\ge n_0$. Now, write
\begin{align*}
\| \Phi(f_{N}) - f \|_{\n{s}{q}}
&\leq 
\| \Phi(f_N) - \Phi(S_nf_N)\|_{\n{s}{q}} \\
&\quad+  \| \Phi(S_nf_N) - \Phi(S_n\var)\|_{\n{s}{q}} \\
&\quad+  \| \Phi(S_n\var) - f\|_{\n{s}{q}}.
\end{align*}
When $n\ge n_0$ and $N\ge N_0$, the first and the last term in the right-hand side above are smaller than $\eps$. 
It remains only to estimate the second term. More precisely, to conclude the proof of Theorem~\ref{Tprinc}, it is sufficient to prove that, for all $n\in\xN$, there holds
\be\label{n120}
\lim_{N\rightarrow + \infty }
\| \Phi(S_nf_N) - \Phi(S_n\var)\|_{\n{s}{q}}=0.
\ee
To do so, we notice that, given some fixed $n\in\xN$, 
the set 
$$
\{ S_nf_N ; N\in\xN\}\cup \{S_n\var\}
$$
is bounded in $\Sigma^{s_1}_q$ 
(see Lemma~\ref{L:smoothing}). 
Therefore, the assumption~\e{nsmooth} implies that 
$$
\sup_{N\in\xN}\| \Phi(S_nf_N) - \Phi(S_n\var)\|_{\n{s_1}{\infty}}
\le \sup_{N\in\xN}
\Big(\| \Phi(S_nf_N)\|_{\n{s_1}{\infty}}+ \|\Phi(S_n\var)\|_{\n{s_1}{\infty}}\Big)<+\infty.
$$
On the other hand, the assumption~\e{ncontract} implies that
$$
\| \Phi(S_nf_N) - \Phi(S_n\var)\|_{\n{s_0}{\infty}}\le C_0\lA S_nf_N-S_n\var\rA_{\n{s_0}{1}}
\le C_0\lA f_N-\var\rA_{\n{s_0}{1}},
$$
where we used the trivial estimate $\lA S_n\rA_{\Sigma^{s_0}_1\to \Sigma^{s_0}_1}\le 1$ to get the last inequality. 
This proves that $\| \Phi(S_nf_N) - \Phi(S_n\var)\|_{\n{s_0}{\infty}}$ converges to $0$ as $N$ tends to $+\infty$. \black{Now we obtain the desired inequality \eqref{n120} using the 
following straightforward interpolation inequality.}

\begin{lemma}
For all $s\in (s_0,s_1)$ and all $q\in [1,+\infty]$, there exists a constant $C$ such that, 
for all $f\in \Sigma^{s_1}_\infty$,
\be\label{interp}
\| f\|_{\n{s}{q}} \le C \| f\|_{\n{s_0}{\infty}}^{\theta} \| f\|_{\n{s_1}{\infty}}^{1-\theta}, 
\qquad \text{with}\quad\theta = \frac{s_1-s}{s_1-s_0}.
\ee
\end{lemma}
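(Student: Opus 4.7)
The plan is a standard frequency-splitting argument, exploiting that each coefficient $\|f_k\|_E$ admits two competing bounds coming from the two hypotheses on the norm:
\[
\|f_k\|_E \le 2^{-ks_0}\|f\|_{\n{s_0}{\infty}} =: 2^{-ks_0} M_0,
\qquad
\|f_k\|_E \le 2^{-ks_1}\|f\|_{\n{s_1}{\infty}} =: 2^{-ks_1} M_1.
\]
Since $s_0<s_1$, the first bound is sharper for small $k$ and the second for large $k$. The natural threshold $N$ is chosen so that the two bounds balance, namely $2^{N(s_1-s_0)} = M_1/M_0$, i.e.\ $N = (s_1-s_0)^{-1}\log_2(M_1/M_0)$ (the case $M_0=0$ or $M_1=0$ is trivial, and in general one can take the integer part without any loss).

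For the case $q=\infty$, I would simply interpolate pointwise using the geometric mean:
\[
2^{ks}\|f_k\|_E = \bigl(2^{ks_0}\|f_k\|_E\bigr)^{\theta}\bigl(2^{ks_1}\|f_k\|_E\bigr)^{1-\theta}\le M_0^\theta M_1^{1-\theta},
\]
which after taking supremum in $k$ gives the result with constant $1$.

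For $q<\infty$, I would split the sum at $N$ and use the appropriate bound on each half:
\[
\|f\|_{\n{s}{q}}^q = \sum_{k\le N} 2^{qks}\|f_k\|_E^q + \sum_{k>N}2^{qks}\|f_k\|_E^q \le M_0^q\sum_{k\le N}2^{qk(s-s_0)} + M_1^q\sum_{k>N}2^{qk(s-s_1)}.
\]
Both are finite geometric sums because $s-s_0>0$ and $s-s_1<0$, so
\[
\sum_{k\le N}2^{qk(s-s_0)} \lesssim 2^{qN(s-s_0)},
\qquad \sum_{k>N}2^{qk(s-s_1)} \lesssim 2^{qN(s-s_1)},
\]
with constants depending only on $s_0,s,s_1,q$. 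Substituting the definition of $N$ and using $s-s_0=(1-\theta)(s_1-s_0)$, $s-s_1=-\theta(s_1-s_0)$, both contributions equal $M_0^{q\theta}M_1^{q(1-\theta)}$ up to constants, yielding
\[
\|f\|_{\n{s}{q}} \le C\, \|f\|_{\n{s_0}{\infty}}^{\theta}\|f\|_{\n{s_1}{\infty}}^{1-\theta}.
\]
There is no real obstacle here; the only delicate point is bookkeeping to check that the threshold $N$ is exactly the one making the two exponential factors collapse to $M_0^{\theta}M_1^{1-\theta}$, and to handle degenerate cases ($M_0=0$, $M_1=0$, or $N<0$) by reducing to the trivial monotone embedding $\n{s_1}{\infty}\hookrightarrow \n{s}{q}$ or $\n{s_0}{\infty}\hookrightarrow \n{s}{q}$ when one of the two norms is much smaller than the other.
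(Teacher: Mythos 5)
Your proposal is correct and follows essentially the same route as the paper: split the sequence at a frequency threshold $N$, bound the low-frequency part by $2^{N(s-s_0)}\|f\|_{\n{s_0}{\infty}}$ and the high-frequency part by $2^{-N(s_1-s)}\|f\|_{\n{s_1}{\infty}}$ via geometric series, then choose $N$ to balance the two terms (the paper phrases this as decomposing $f=S_Nf+(I-S_N)f$ and optimizing in $N$ at the end). Your pointwise geometric-mean treatment of the case $q=\infty$ is a harmless streamlining, and note that the degenerate case $N<0$ never occurs since $\|f\|_{\n{s_0}{\infty}}\le\|f\|_{\n{s_1}{\infty}}$ automatically for sequences indexed by $k\ge 0$.
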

\begin{proof}
Let $N\in\xN$ and consider the decomposition $f=S_Nf+(I-S_N)f$. 
Notice that
\begin{align*}
\| S_Nf\|_{\n{s}{q}}^q&=\sum_{n=0}^N2^{nqs}\lA f_n\rA_E^q=
\sum_{n=0}^N2^{nqs_0}2^{nq(s-s_0)}\lA f_n\rA_E^q\\
&\le 
\left(\sum_{n=0}^N2^{nq(s-s_0)}\right)\Big(\sup_{n\in\xN}2^{nqs_0}\lA f_n\rA_E^q\Big)^q\les 2^{Nq(s-s_0)}\lA f\rA_{\n{s_0}{\infty}}^q.
\end{align*}
Similarly, we have
\begin{equation*}
\| (I-S_N)f\|_{\n{s}{q}}^q
\le 
\left(\sum_{n=N+1}^\infty2^{nq(s-s_1)}\right)\Big(\sup_{n\in\xN}2^{nqs_1}\lA f_n\rA_E^q\Big)^q
\les 2^{Nq(s-s_1)}\lA f\rA_{\n{s_1}{\infty}}^q.
\end{equation*}
It then follows from the triangle inequality that
$$
\lA f\rA_{\n{s}{q}}\les 2^{N(s-s_0)}\lA f\rA_{\n{s_0}{\infty}}+2^{-N(s_1-s)}\lA f\rA_{\n{s_1}{\infty}},
$$
from which we obtain the desired inequality~\e{interp} by optimizing in 
$N$.
\end{proof}

The proof of 
Theorem \ref{Tprinc} is now complete. 

\subsection{Proof of Proposition~\ref{P19}}\label{S:3.6}

We now assume that $\Phi$ satisfies~\e{nsmooth} together with~\e{ncontractm2}. 
Recall that the latter is a weak form of assumption~\e{ncontract}, where the estimate is assumed 
to hold only for smooth elements $v,w$ in $\Sigma^\infty_1(E) \cap B_{s}(0,r)$. 
The only place where we used the stronger assumption~\e{ncontract} was in the proof 
of Lemma~\ref{L:easyconv}. This lemma asserts that the 
sequence $\Phi(S_n\var)$ converges to $\Phi(\var)$ in $\Sigma^{s_0}_\infty(F)$ as $n$ 
tends to $+\infty$. Therefore, we only need to prove that 
this result holds under the weaker assumption~\e{ncontractm2}, 
provided that $F$ is a Banach spaces. To do so, since $S_{k}f$ 
belongs to $\Sigma^\infty_1(E) \cap B_{s}(0,r)$ 
for all $k\in\xN$, one apply~\e{ncontractm2} to deduce that
$$
\sum_{p=n}^{+\infty}\| \Phi(S_{p+1}f) - \Phi(S_pf)\|_{\n{s_0}{\infty}} \le 
C_0\sum_{p=n}^{+\infty}\| S_{p+1}f - S_pf\|_{\n{s_0}{1}}.
$$
It follows that
\be\label{telecopic101}
\begin{aligned}
\sum_{p=n}^{+\infty}\| \Phi(S_{p+1}f) - \Phi(S_pf)\|_{\n{s_0}{\infty}} 
&\le C_0\sum_{k=n+1}^{+\infty} 2^{k s_0}\| f_{k}\|_{E} \\
&\les C_0\left(\sum_{k=n+1}^{+\infty} 2^{qk s}\| f_{k}\|_{E}^q\right)^{1/q}.
\end{aligned}
\ee

Now, observe that, since $\n{s_0}{\infty}(F)$ is a Banach space when $F$ is one, 
the fact that the right-hand side of \e{telecopic101} is finite implies that the series $\sum_{p=n}^{+\infty} \Phi(S_{p+1}f) - \Phi(S_pf)$ is normally convergent and hence converges in~$\n{s_0}{\infty}(F)$. Moreover, since it is assumed that the mapping $\Phi$ is continuous from 
the ball of radius $r$ around the origin of $\Sigma^s_q(E)$ with values in $\Sigma^{s_0}_q(F)$, it is also 
continuous with values in $\Sigma^{s_0}_\infty(F)$. This justifies writing:
$$
\Phi(f) - \Phi(S_nf) = \sum_{p=n}^{+\infty} \Phi(S_{p+1}f) - \Phi(S_pf).
$$
Secondly, since the right-hand side in \e{telecopic101} converges to $0$ as $n$ tends to infinity, 
we conclude that the sequence $\Phi(S_n\var)$ converges to $\Phi(\var)$ in $\Sigma^{s_0}_\infty(F)$ as $n$ tends to $+\infty$. This completes the proof.

\section{Proof of Theorem~\ref{princ}}\label{S:4}

\black{We show here} that Theorem~\ref{princ} can be deduced from our abstract interpolation 
\black{ result in Theorem~\ref{Tmain}}.
 One interesting feature of the proof is that we will exploit the fact that we only assume 
some weak bounds in Theorem~\ref{Tmain} (by weak bounds, we refer to the fact that the mapping $\Phi$ is only estimated with the norms $\lA\cdot\rA_{Y^{s_0}_\infty}$ and $\lA\cdot\rA_{Y^{s_1}_\infty}$ while the conclusion involves the strong norm $\lA\cdot\rA_{Y^s}$). 

Let us introduce several 
scales of Banach spaces. 
On the one hand the Sobolev scale $X^s=H^s(\xR^d)$, which is of type $(2,L^2(\xR^d))$ when the operators $L$ and $R$ are given by the Littlewood-Paley decomposition, as explained in Example~\ref{Examples}. 
We also introduce the scale of time-dependent 
functions $f=f(t)$ with values in $H^s$, which are $L^\mu$ in time, that is the spaces
$$
L^\mu((0,T);H^s(\xR^d)).
$$
We further introduce a third scale, which we refer to as 
the Chemin-Lerner 
scale $Y^s$, defined by: 
$$
Y^s=\left\{ f\in L^\mu((0,T);H^s(\xR^d))\,;\,
\lA f\rA_{Y^s}=\Big(\sum_{j\in \xN} \lA \Delta_j f\rA_{L^\mu([0,T];H^s(\xR^d))}^2\Big)^{\frac{1}{2}}<+\infty\right\},
$$
where $I=\sum_{j\in\xN}\Delta_j$ still denotes the Littlewood-Paley decompositon of tempered distributions $u\in\mathcal{S}'(\xR^d)$ and
$$
\lA \Delta_j f\rA_{L^\mu([0,T];H^s(\xR^d))}^2
=\bigg(\int_0^T \lA \Delta_j (f(t))\rA_{H^s}^\mu\, dt\bigg)^{\frac{2}{\mu}}.
$$
Now the two important observations are the following. Firstly, $Y^s$ is a scale of Banach spaces of type $(2,F)$ where $F=L^\mu((0,T);L^2(\xR^d))$. And secondly, 
since $\mu\ge 2$ by assumption,  
it follows from the Minkowski inequality (see Stein~\cite{Stein1970}) that 
$$
\lA f\rA_{L^\mu([0,T];H^s(\xR^d))}\le \lA f\rA_{Y^s}.
$$
There is in addition a little trick: 
since $\lA \Delta_j\rA_{H^r\to H^r}\le 1$ (as follows from Plancherel's theorem) for all $j\in\xN$ and all $r\in\xR$, we have
$$
\lA \Delta_j f\rA_{L^\mu([0,T];H^r(\xR^d))}
\le 
\lA f\rA_{L^\mu([0,T];H^r(\xR^d))}
$$
and hence
$$
\lA f\rA_{Y^r_\infty}=\sup_{j\in\xN}
\lA \Delta_j f\rA_{L^\mu([0,T];H^r(\xR^d))}
\le 
\lA f\rA_{L^\mu([0,T];H^r(\xR^d))}.
$$
Therefore, under the assumptions of Theorem~\ref{princ} we see that
$$
\| \Phi(v) - \Phi(w)\|_{Y^{s_0}_{\infty}} \leq 
\| \Phi(v) - \Phi(w)\|_{L^\mu((0,T);H^{s_0})
}
$$
and similarly 
$$
\| \Phi(v)\|_{Y^{s_1}_{\infty}} \leq 
\| \Phi(v)\|_{L^\mu((0,T);H^{s_1})
}.
$$
Therefore, under the assumption of Theorem~~\ref{princ}, the flow map $\Phi$ satisfies those of Theorem~\ref{Tmain} applied with $$
q=2,\quad E=L^2(\xR^d)\quad\text{and}\quad
F=L^\mu((0,T);L^2(\xR^d)).
$$
And hence we deduce the first two points of Theorem~\ref{princ}.

It remains to prove the last \black{assertion of Theorem~\ref{princ}} about the continuity in time. Consider a function $v_0\in B_s(u_0,r)$ such that $\Phi(v_0)$ belongs to $C^0([0,T];H^{-\infty}(\xR^d)))$. 
We want to prove that 
$\Phi(v_0)$ belongs to $C^0([0,T];H^{s}(\xR^d))$. 
Given an integer $N$, 
introduce the smoothing operator 
$S_N\colon H^{-\infty}(\xR^d)\to H^{+\infty}(\xR^d)$ defined by
$$
S_N u:=\sum_{j=0}^N\Delta_j u. 
$$
Now, the functions $S_N\Phi(v_0)$ defined by 
$$
(S_N(\Phi(v_0)))(t):=S_N(\Phi(v_0)(t))
$$
belong to $C^0([0,T];H^{+\infty}(\xR^d))$. 
To prove that $\Phi(v_0)$ belongs to $C^0([0,T];H^{s}(\xR^d))$, it is thus sufficient to show that $S_N\Phi(v_0)$ tends to $\Phi(v_0)$ in 
$L^\infty([0,T];H^s(\xR^d))$ when $N$ tends to $+\infty$. To do so, we 
use that, by almost orthogonality, we have
$$
\lA \Phi(v_0)-S_N\Phi(v_0)\rA_{L^\infty([0,T];H^s(\xR^d))}^2\le K \sum_{j\ge N-N_0}\lA \Delta_j \Phi(v_0)\rA_{L^\infty([0,T];H^s(\xR^d))}^2,
$$
for some constants $K$ and $N_0$ depending only on the  choice of 
functions $\psi$ in the Littlewood-Paley decomposition. Since the series in the right-hand is convergent (as $\Phi(v_0)\in Y^s$), we see that the right-hand side goes to $0$ as $N$ tends to $+\infty$ as the remainder of a converging series. 
This completes the proof.

\appendix 

\section{Littlewood-Paley decompositions}\label{appendix:LP}

We recall here some elementary results 
about the Littlewood--Paley decomposition. This decomposition makes it possible to introduce a parameter into a problem
that has none. 
There are many books that develop a systematic study
of the Littlewood--Paley decomposition: Alinhac and G\'erard~\cite{MR2304160}, Bahouri, Danchin and Chemin~\cite{MR2768550}, Coifman and Meyer~\cite{CoMeAs,MeCo}, M\'etivier~\cite{MR2418072}, 
Muscalu and Schlag~\cite{Muscalu-Schlag1}, Stein~\cite{Stein1970,Stein1993}, Tao~\cite{Tao-dispersive} or Taylor~\cite{TaylorPDE,TaylorIII}.

\begin{lemma}\label{lemm:dec dya 2}
Let $n\ge 1$.
There exist $\psi \in C_0^\infty(\xR^d)$ and
$\varphi \in C_0^\infty(\xR^d)$ such that the following properties are satisfied:
\begin{enumerate}
\item we have $0\le \psi\le 1$, $0\le \varphi\le 1$ and
\[
\supp \psi \subset\{\la\xi\ra \le 1\},
\qquad \supp \varphi\subset \Bigl\{\frac34\le |\xi| \le 2 \Bigr\};
\]
\item for all $\xi\in\xR^n$,
\be\label{17:1325n1}
1=\psi(\xi) +\sum_{p=0}^\infty \varphi(2^{-p}\xi);
\ee

\item (almost orthogonality) for all $\xi\in\xR^n$,
\be\label{i:ao}
\frac13\le\psi^2(\xi)+\sum_{p=0}^{+\infty}\varphi^2(2^{-p}\xi)\le 1.
\ee
\end{enumerate}
\end{lemma}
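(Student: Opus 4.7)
The plan is to build $\psi$ and $\varphi$ from a single nonincreasing radial smooth cutoff via a dyadic difference. First I would fix a function $\chi\in C_0^\infty(\xR^d)$ which is radial, nonincreasing along rays from the origin, with $0\le \chi\le 1$, $\chi\equiv 1$ on $\{|\xi|\le 3/4\}$ and $\chi\equiv 0$ on $\{|\xi|\ge 1\}$; such a $\chi$ is obtained by convolving the indicator of a ball with a standard mollifier, or by setting $\chi(\xi)=\rho(|\xi|)$ for an appropriate one-dimensional smooth cutoff $\rho$. I would then set $\psi:=\chi$ and $\varphi(\xi):=\chi(\xi/2)-\chi(\xi)$. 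For the support of $\varphi$: if $|\xi|\le 3/4$ then $\chi(\xi)=\chi(\xi/2)=1$ so $\varphi(\xi)=0$; and if $|\xi|>2$ then both terms vanish. Hence $\supp \varphi\subset\{3/4\le |\xi|\le 2\}$. The monotonicity of $\chi$ in $|\xi|$ yields $\chi(\xi/2)\ge\chi(\xi)$, so $\varphi\ge 0$, and obviously $\varphi\le \chi(\xi/2)\le 1$.

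To prove the partition of unity identity \eqref{17:1325n1}, I would use the telescoping sum
\[
\sum_{p=0}^N \varphi(2^{-p}\xi)=\sum_{p=0}^N\bigl(\chi(2^{-p-1}\xi)-\chi(2^{-p}\xi)\bigr)=\chi(2^{-N-1}\xi)-\chi(\xi).
\]
For each fixed $\xi\in\xR^d$, the argument $2^{-N-1}\xi$ enters the ball $\{|\eta|\le 3/4\}$ once $N$ is large enough, so $\chi(2^{-N-1}\xi)\to 1$ as $N\to\infty$; passing to the limit gives $\psi(\xi)+\sum_{p=0}^\infty\varphi(2^{-p}\xi)=1$. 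Note the sum is locally finite in $\xi$, so no convergence issue arises.

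For the almost orthogonality \eqref{i:ao}, the upper bound follows from the elementary inequality $a^2\le a$ valid for $a\in[0,1]$: each of the nonnegative values $\psi(\xi)$ and $\varphi(2^{-p}\xi)$ lies in $[0,1]$ and their sum equals $1$, so the sum of squares is bounded by the sum, namely $1$. For the lower bound I would analyze the overlap of supports. The support condition $\{3/4\le 2^{-p}|\xi|\le 2\}$ shows that $\varphi(2^{-p}\xi)$ and $\varphi(2^{-p-2}\xi)$ have disjoint supports (since $2^{p+2}\cdot 3/4=3\cdot 2^p>2\cdot 2^p$), so at most two consecutive terms $\varphi(2^{-p}\xi)$ are nonzero at any $\xi$; moreover $\psi$ only overlaps with the $p=0$ term. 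Thus at each $\xi$ at most two of the summands are nonzero. When only one is nonzero it equals $1$ and its square equals $1\ge 1/3$; when two values $a,b\in[0,1]$ with $a+b=1$ occur, $a^2+b^2=1-2ab\ge 1/2\ge 1/3$. The only slightly delicate point of the whole argument is this combinatorial bookkeeping, which depends on the precise numerical constants $3/4$ and $2$ to guarantee that no three shells overlap; all other steps are essentially routine.
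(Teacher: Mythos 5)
Your proposal is correct and follows essentially the same route as the paper: the same telescoping construction $\varphi(\xi)=\psi(\xi/2)-\psi(\xi)$, the same limit argument for the partition of unity, and the same overlap-counting idea for the almost orthogonality. The only differences are cosmetic refinements — you make the cutoff radial and nonincreasing so that $\varphi\ge 0$ is explicit, and you count at most two overlapping terms (yielding the sharper lower bound $1/2$) where the paper crudely allows three and uses $(a+b+c)^2\le 3(a^2+b^2+c^2)$.
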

\begin{proof}
Consider a function 
$\psi \in C^{\infty}_{0}(\xR^d;\xR)$
satisfying
$\psi (\xi) = 1$ for $\la\xi\ra\le 3/4$, and
$\psi (\xi)=0$ for $\la \xi \ra \ge 1$. 
Next, we define
$\varphi (\xi)= \psi (\xi /2) - \psi(\xi)$ and we note that~$\varphi$ is supported in the annulus
$\{3/4 \le \la \xi \ra \le 2\}$.
For all integers~$N$ and all $\xi \in\xR^d$, we have
\[
\psi(\xi)+\sum_{p=0}^N \varphi(2^{-p}\xi)=\psi(2^{-N-1}\xi),
\]
which immediately implies \eqref{17:1325n1} by letting
$N$ tend to $+\infty$.

It remains to prove~\eqref{i:ao}. For all integers $N$, we have
\[
\psi^2 (\xi) + \sum_{p=0}^{N}
\varphi^2 (2^{-p} \xi)\le \Bigl(\psi (\xi) + \sum_{p=0}^{N}\varphi(2^{-p} \xi)\Bigr)^2.
\]
On the other hand, we note that, for all $\xi\in \xR^d$, there are never more than three non-zero terms in
the set $\{\psi(\xi),~\varphi(\xi),\ldots,\varphi(2^{-p}\xi),\ldots\}$.
Therefore, using the elementary inequality $(a+b+c)^2\le 3(a^2+b^2+c^2)$, we obtain
\[
\Bigl(\psi (\xi) + \sum_{p=0}^{N}\varphi(2^{-p} \xi)\Bigr)^2\le
3\Bigl(\psi^2 (\xi) + \sum_{p=0}^{N}\varphi^2(2^{-p}\xi)\Bigr).
\]
We then obtain~\eqref{i:ao} by letting $N$ tend to $+\infty$ in the previous inequalities.
\end{proof}

Let us define, for $p \ge 0$, the Fourier multipliers~$\Delta_{p}$ as follows:
\[
\Delta_{0} := \psi (D_x) \quad\text{and}\quad
\Delta_{p} := \varphi\bigl(2^{-(p-1)}D_x\bigr) \quad(p\ge 1).
\]
Then we obtain a decomposition of the identity: we have
\[
I = \sum_{p \ge 0} \Delta_{p},
\]
in the sense of distributions: for all $u\in \Sr'(\xR^d)$,
the series $\sum \Delta_{p}u$
converges to~$u$ in $\Sr'(\xR^d)$, which means that $\sum_{p=0}^N\langle \Delta_{p}u,\varphi\rangle_{\Sr'\times \Sr}$ converges to $\langle u,\varphi\rangle_{\Sr'\times \Sr}$ when $N$ goes to $+\infty,$  for all $\varphi\in \Sr(\xR^d)$.

We then recall how to 
use the Littlewood--Paley decomposition to characterize Sobolev spaces.

\begin{proposition}
Consider $s\in\xR$. A tempered distribution $u\in \Sr'(\xR^n)$ belongs to the Sobolev space $H^s(\xR^n)$ if and only if
\begin{enumerate}
\item[(a)] for all $p\ge 0$, $\Delta_pu\in L^2(\xR^n)$;
\item[(b)] the sequence $\delta_p=2^{ps}\|\Delta_p u\|_{L^2}$ belongs to $\ell^2(\xN)$.
\end{enumerate}
Moreover, there exists a constant~$C$ such that
\be\label{i:aoHs}
\frac{1}{C}\|u\|_{H^s}\le \Bigl(\sum_{p=0}^{+\infty} \delta_p^2\Bigr)^{\mez}\le C\|u\|_{H^s}.
\ee
\end{proposition}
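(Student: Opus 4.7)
The plan is to transfer the problem to the Fourier side via Plancherel's theorem and exploit the fact that each $\Delta_p u$ has its Fourier transform localized in a dyadic annulus on which the weight $(1+\la\xi\ra^2)^s$ is essentially constant. For $p\ge 1$ the symbol $\varphi(2^{-(p-1)}\xi)$ is supported in $\{3\cdot 2^{p-3}\le \la\xi\ra\le 2^p\}$, so $(1+\la\xi\ra^2)^s$ is two-sidedly comparable to $2^{2ps}$ there, with constants depending only on $s$; and for $p=0$ the symbol $\psi(\xi)$ is supported in the unit ball, where the weight is bounded above and below by positive constants. Combining these observations with Plancherel then yields, for every $p\ge 1$,
\[
2^{2ps}\lA \Delta_p u\rA_{L^2}^2 \sim \int \varphi^2(2^{-(p-1)}\xi)\,(1+\la\xi\ra^2)^s\,\la \hat u(\xi)\ra^2\, d\xi,
\]
with the analogous identity involving $\psi^2$ for $p=0$, and constants depending only on $s$.

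Summing over $p\ge 0$ and interchanging sum and integral (justified by Tonelli, since the integrands are nonnegative), I obtain
\[
\sum_{p\ge 0} 2^{2ps}\lA\Delta_p u\rA_{L^2}^2 \sim \int \Big(\psi^2(\xi)+\sum_{p\ge 0}\varphi^2(2^{-p}\xi)\Big)(1+\la\xi\ra^2)^s\,\la\hat u(\xi)\ra^2\,d\xi.
\]
The almost-orthogonality inequality \e{i:ao} shows that the bracket lies between $1/3$ and $1$, so by Plancherel the right-hand side is comparable to $\int (1+\la\xi\ra^2)^s\,\la\hat u(\xi)\ra^2\,d\xi = \lA u\rA_{H^s}^2$. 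This simultaneously yields the two-sided estimate \e{i:aoHs}, from which the characterization (a) and (b) follows: when $u\in H^s$ the computation shows that $\sum_p \delta_p^2 < +\infty$ and each $\Delta_p u \in L^2$; conversely, if (a) and (b) hold, the same identity bounds $\lA u\rA_{H^s}^2$ by a constant times $\sum_p \delta_p^2 < +\infty$, so $u\in H^s$.

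There is essentially no substantive obstacle here; the proof reduces to two well-known ingredients, namely the frequency localization of $\Delta_p u$ and the almost-orthogonality inequality \e{i:ao}. The only point requiring minor care is the index shift in the definition $\Delta_p = \varphi(2^{-(p-1)}D_x)$ for $p\ge 1$, which reindexes the partition-of-unity identity \e{17:1325n1} but produces only harmless factors depending on $s$ in the final equivalence.
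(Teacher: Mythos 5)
Your proof is correct and follows essentially the same route as the paper: Plancherel, the comparison $(1+\la\xi\ra^2)^s\sim 2^{2ps}$ on the dyadic support of each block, and the almost-orthogonality bound \eqref{i:ao}. The only difference is organizational — the paper first applies \eqref{i:ao} to $\langle D_x\rangle^s u$ and then compares each block $\lA\Delta_p\langle D_x\rangle^s u\rA_{L^2}$ with $2^{ps}\lA\Delta_p u\rA_{L^2}$, whereas you insert the weight directly and sum the integrals, which is the same argument in a different order.
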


\begin{proof}
It follows from~\eqref{i:ao} and the Plancherel identity that
\be\label{i:aoL2}
\sum_{p\ge 0} \|\Delta_p u\|_{L^2}^2
\le \lA u \rA_{L^2}^2
\le 3\sum_{p\ge 0}\|\Delta_p u\|_{L^2}^2.
\ee
Denote by $\langle D_x\rangle^s$ the Fourier multiplier $(I-\Delta)^{s/2}$. 
Since $\|u\|_{H^s}=\lA \langle D_x\rangle^s u\rA_{L^2}$, by applying \eqref{i:aoL2} with
$u$ replaced by $\langle D_x\rangle^s u$, we get
\[
\sum_{p\ge 0} \lA \Delta_p \langle D_x\rangle^s u\rA_{L^2}^2
\le \lA u \rA_{H^s}^2
\le 3\sum_{p\ge 0}\lA \Delta_p \langle D_x\rangle^su\rA_{L^2}^2.
\]
Consider $p\ge 1$ and write
\[
\lA \Delta_p \langle D_x\rangle^s u\rA_{L^2}^2=(2\pi)^{-n}\int_{\xR^n}(1+|\xi|^2)^s \varphi^2(2^{-p}\xi)\vert\hat{u}(\xi)\vert^2d\xi.
\]
Since $(1+|\xi|^2)^s \varphi^2(2^{-p}\xi) \sim 2^{2ps}$ on the support of $\varphi^2(2^{-p}\xi)$, we see that
\be\label{est:Hsdyadique}
\frac{1}{C}\,2^{2ps}\,\|\Delta_p u\|_{L^2}^2\le \lA \Delta_p \langle D_x\rangle^s u\rA_{L^2}^2\le C 2^{2ps}\,\|\Delta_p u\|_{L^2}^2,
\ee
for a certain constant~$C$ depending only on $s$. We have a similar estimate for $\Delta_{0}u$ and the desired result follows easily.
\end{proof}

\end{document}